\documentclass[leqno,12pt]{amsart} 
\setlength{\textheight}{23cm}
\setlength{\textwidth}{16cm}
\setlength{\oddsidemargin}{0cm}
\setlength{\evensidemargin}{0cm}
\setlength{\topmargin}{0cm}
\usepackage{amssymb} 
\usepackage[all]{xy}
\usepackage{graphicx}

\theoremstyle{plane} 
\newtheorem{theorem}{\indent\sc Theorem}[section] 
\newtheorem{lemma}[theorem]{\indent\sc Lemma}
\newtheorem{corollary}[theorem]{\indent\sc Corollary}
\newtheorem{proposition}[theorem]{\indent\sc Proposition}

\theoremstyle{definition} 
\newtheorem{definition}[theorem]{\indent\sc Definition}
\newtheorem{remark}[theorem]{\indent\sc Remark}
\newtheorem{example}[theorem]{\indent\sc Example}


\begin{document}

\title[Projective bundles with nef normalized tautological divisor]{On projective space bundles with nef normalized tautological divisor} 

\author[K. Yasutake]{Kazunori Yasutake} 

\subjclass[2000]{ 
Primary 14J40; Secondary 14J10, 14J60.
}

\keywords{ 
Projective manifold, normalized tautological divisor, vector bundle.
}
\address{
Graduate School of Mathematical Sciences \endgraf
Kyushu University \endgraf
Fukuoka 819-0395 \endgraf
Japan
}
\email{k-yasutake@math.kyushu-u.ac.jp}

\maketitle

\begin{abstract}
In this paper, we study the structure of projective space bundles whose normalized tautological divisor is nef.
As an application, we get a characterization of abelian varieties up to finite \'etale covering.
\end{abstract}

\section*{Introduction}
For a morphism between smooth projective varieties $\pi : Y \rightarrow X$ the relative anti-canonical divisor $-K_{\pi}$ on Y is defined by the difference of anticanonical divisors
$-K_{\pi}:=-K_Y-\pi^*(-K_X)$. J. Koll\'ar, Y. Miyaoka and S. Mori proved that the relative anti-canonical divisor of a non-constant generically smooth morphism cannot be ample 
in arbitrary characteristic \cite{kollar-miyaoka-mori}, \cite{miyaoka1}. 
In the case where $\pi : Y=\mathbb{P}_X(\mathcal{E}) \rightarrow X$ is a projectivization of vector bundle on X, we know that the relative anti-canonical divisor is positive proportion of the normalized tautological divisor. 
Miyaoka studied the case where Y is a curve and showed that the nefness of the normalized tautological divisor is equal to the semistability of vector bundle \cite{miyaoka2}. 
Nakayama generalized this to the arbitrary dimension in \cite{nakayama2}. In this paper we study the more explicit structure of vector bundles with nef normalized tautological divisor. 
In Section 1, we review the definition and some known results. 
In Section 2, we treat semiample cases and show that a pullback of such a bundle by some finite covering is trivial up to twist by some line bundle.    
In Section 3, we treat the case where the normalized tautological divisor is nef. In particular in positive characteristic these bundles are charactrized as special strongly semistable
bundles.
In Section 4 we study manifolds whose tangent bundle have a nef normalized tautological divisor. We prove such surfaces are isomorphic to a quotient of abelian surface by some finite \'etale morphism. Moreover under the assumption that such a divisor is semiample, we can show that finite \'etale covering of abelian varieties are all varieties satisfying this property.

\section*{Acknowledgements}
The author would like to express his gratitude to his supervisor Professor Eiichi Sato for many useful discussions and much warm encouragement. He would also like to thank Professor Noboru Nakayama for helpful advice. 

\section*{notation}
We will work, throughout this paper, over the complex number field $\mathbb{C}$ unless otherwise mentioned.
We freely use the customary terminology in algebraic geometry.
Vector bundles are often identified with the locally free sheaves of their sections, and these words are used interchangeably.
Line bundles are identified with linear equivalence classes of Cartier divisors , and their tensor products are denoted additively.

\section{Preliminary}
At first we recall the notion of normalized tautological divisor defined in \cite{nakayama2} which is originally introduced by Y. Miyaoka as normalized hyperplane class in \cite{miyaoka2}.

\begin{definition}
Let X be a smooth projective variety over algebraic closed field of arbitrary characteristic and $\mathcal{E}$ a rank $r$ vector bundle on X. 
A normalized tautological divisor $\Lambda_{\mathcal{E}}$ of $\mathbb{P}_X(\mathcal{E})$ is a $\mathbb{Q}$-divisor on $\mathbb{P}_X(\mathcal{E})$
such that the equality $r \Lambda_{\mathcal{E}}=r \xi_{\mathcal{E}}- \pi ^{*}(det(\mathcal{E}))$ holds where $\xi_{\mathcal{E}}$ is the tautological divisor on 
$\mathbb{P}_X(\mathcal{E})$ and 
$\pi$ is the natural projection $\pi: \mathbb{P}_X(\mathcal{E})\rightarrow X$. In particular, $r \Lambda_{\mathcal{E}}$ is linearly equivalent                                                                                                                                              to the relative anti-canonical divisor $-K_{\pi}:=-K_{\mathbb{P}_X(\mathcal{E})}-\pi^*(-K_X)$.
\end{definition}

By virtue of the following theorem, we know that the normalized tautological divisor cannot be ample.

\begin{theorem}$($\cite{kollar-miyaoka-mori},\cite{miyaoka1},\cite{zhang},\cite{debarre}, char\;$\geqslant0)$
Let X and Y be smooth projective varieties over an algebraically closed field of arbitrary charactristic and let $\pi : X \rightarrow Y$ be a nonconstant generically smooth morphism.
Let H be an ample divisor on Y. 
For any positive $\epsilon$, the divisor $-K_{\pi}-\epsilon \pi^{*}H$ is not nef. In particular, the relative anti-canonical divisor $-K_{\pi}$ is not ample.
\end{theorem}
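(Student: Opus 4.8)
The plan is to argue by contradiction: assume $-K_{\pi}-\epsilon\pi^{*}H$ is nef for some $\epsilon>0$ and derive an impossibility by producing rational curves through the bend-and-break technique. The first step is to reduce to the case in which the base is a curve. Choosing $n-1$ general members of $|mH|$ (with $n=\dim Y$, $m\gg0$) and letting $B$ be their intersection, $X_{B}=\pi^{-1}(B)$, Bertini keeps $B$ and $X_{B}$ smooth and $\pi_{B}:X_{B}\to B$ generically smooth. Adjunction makes the auxiliary cutting classes cancel and gives $-K_{\pi_{B}}=(-K_{\pi})|_{X_{B}}$ and $\pi^{*}H|_{X_{B}}=\pi_{B}^{*}(H|_{B})$ with $H|_{B}$ ample; since the restriction of a nef divisor is nef, the hypothesis descends and $-K_{\pi_{B}}-\epsilon\pi_{B}^{*}(H|_{B})$ is nef. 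Thus we may assume $Y=B$ is a smooth projective curve. Restricting to a general fibre $F$ gives $(-K_{\pi})|_{F}=-K_{F}$, so the hypothesis already forces $-K_{F}$ to be nef.

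The second step produces a covering family of rational curves in the fibres. A general complete-intersection curve $C\subset X$ is a multisection of $\pi$, so $\pi^{*}H\cdot C>0$ and nefness yields $-K_{\pi}\cdot C\geq\epsilon\,\pi^{*}H\cdot C>0$; as $C$ varies these curves sweep out $X$. I would then invoke the relative form of the Miyaoka--Mori bend-and-break theorem, which is exactly the characteristic-free statement quoted above, proved in characteristic $0$ by deformation of morphisms and in characteristic $p$ by the Frobenius trick of Zhang and Debarre (composing with a large power of Frobenius multiplies $-K_{\pi}\cdot C$ while fixing the genus), to obtain, through a general point of a general fibre, a rational curve $R$ contained in that fibre with $0<-K_{\pi}\cdot R\leq\dim F+1$. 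The outcome is that every general fibre is uniruled, with minimal rational curves of bounded relative anticanonical degree.

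The final step extracts the contradiction, and the surface case is the model. When $\dim X=2$ the fibres are then smooth rational curves, so $X$ is a (possibly blown-up) ruled surface over $B$; one computes $(-K_{\pi})^{2}\leq0$ while $-K_{\pi}\cdot\pi^{*}H=(-K_{\pi}\cdot F)\deg(H|_{B})=2\deg(H|_{B})>0$, whence $(-K_{\pi}-\epsilon\pi^{*}H)^{2}=(-K_{\pi})^{2}-2\epsilon(-K_{\pi}\cdot\pi^{*}H)<0$, contradicting the fact that the self-intersection of a nef divisor on a surface is nonnegative. In general I would pass to the relative maximal rationally connected reduction $X\dashrightarrow Z$ over $B$: the fibres of $Z\to B$ are no longer uniruled but still carry a nef relative anticanonical class, so the numerical criterion for uniruledness forces their canonical class to be numerically trivial; over the curve $B$ this makes $-K_{Z/B}$ numerically a multiple of the fibre, and the weak positivity of $\pi_{*}\omega_{Z/B}^{\otimes m}$ forces that multiple to be nonpositive, contradicting $-K_{Z/B}\geq\epsilon\pi^{*}H>0$. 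The main obstacle is precisely this higher-dimensional endgame: organizing the bend-and-break output into a genuine fibration compatible with $\pi$ and invoking the positivity of the relative dualizing sheaf, all uniformly in arbitrary characteristic, whereas the reduction to a curve and the surface computation are routine.
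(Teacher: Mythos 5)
First, a point of comparison: the paper does not prove this statement at all --- it is quoted as a known theorem, with references to Koll\'ar--Miyaoka--Mori, Miyaoka, Zhang and Debarre --- so your proposal can only be measured against those sources. Your first two steps (slicing the base down to a curve, then bend-and-break, with Frobenius standing in for deformation counts in characteristic $p$) do follow the strategy of that literature.

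The genuine gap sits exactly where you located the ``main obstacle'', and it is not a matter of bookkeeping. The output of your second step --- through a general point, a rational curve $R$ contained in the fibre with $0<-K_{\pi}\cdot R\leqslant\dim F+1$ --- carries no contradiction whatsoever: take $X=F\times B$ with $F$ Fano and $B$ a curve of positive genus. Every fibre is covered by rational curves of bounded anticanonical degree, yet $-K_{X/B}=\mathrm{pr}_F^{*}(-K_F)$ is nef; the failure of nefness of $-K_{X/B}-\epsilon\pi^{*}H$ is detected only by horizontal curves such as $\{pt\}\times B$, on which $-K_{X/B}$ has degree zero while $\pi^{*}H$ has positive degree. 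So the whole content of the theorem is the production of a \emph{horizontal} curve whose $-K_{\pi}$-degree is smaller than $\epsilon\,\pi^{*}H\cdot C$, and vertical rational curves cannot produce one. This is what the cited proofs actually do: Miyaoka deforms a horizontal curve inside $\mathrm{Hom}_Y(C,X)$ --- relative deformations, whose dimension is governed by $-K_{X/Y}\cdot C$, precisely the quantity your nefness hypothesis controls --- and tracks the horizontal component of the breaking; Koll\'ar--Miyaoka--Mori similarly exploit the refined estimate $M\cdot R\leqslant 2\dim X\,(M\cdot C)/(-K_X\cdot C)$ for auxiliary nef divisors $M$. Note also that your Frobenius remark does not do what you want: composing $C$ with Frobenius multiplies \emph{all} intersection numbers, including $\pi^{*}K_Y\cdot C$, so it cannot manufacture a curve with $-K_X\cdot C>0$ over a base of general type; what it buys is domination of the genus term in the $\mathrm{Hom}$-dimension estimate, and only in the relative theory just described.

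Your higher-dimensional endgame also fails tool by tool. Nefness does not descend to the relative MRC reduction $Z\to B$ (which is only birationally defined, with $Z$ possibly singular), so the claim that its fibres ``still carry a nef relative anticanonical class'' is unsupported; non-uniruledness gives pseudoeffectivity of the canonical class, not numerical triviality, and only in characteristic zero (Boucksom--Demailly--P\u{a}un--Peternell); and weak positivity of $\pi_{*}\omega_{Z/B}^{\otimes m}$ is Viehweg's characteristic-zero theorem, which is known to fail in positive characteristic (already for smooth non-isotrivial families of abelian surfaces over $\mathbb{P}^1$, by Moret-Bailly's examples, whose Hodge bundles have quotients of negative degree). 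Indeed the fragment of positivity that does survive in characteristic $p$ --- nonnegativity of $\deg\det\pi_{*}\omega_{X/B}$ for generically smooth families over a curve --- is essentially the statement being proven, so that route is circular there. Your reduction to a curve base and your surface computation are correct (modulo the caveat that in characteristic $p$ Bertini smoothness for the base-point-free but not very ample system $\pi^{*}|mH|$ needs a separate argument), but as written the proposal does not complete the proof in either characteristic.
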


Moreover N.Nakayama showed that the normalized tautological divisor of a projective space bundle $\pi: \mathbb{P}_M(\mathcal{E})\rightarrow M$
cannot be nef and big in characteristic 0. To state his theorem, we recall the definition of numerical D-dimension of nef divisors on smooth projective varieties.

\begin{definition}
Let X be an n-dimensional smooth projective variety, A an ample divisor and D a nef divisor on X. 
We define the numerical D-dimension $\nu(D, X)$ of D by 
\[\nu(D, X)=\displaystyle \max \{k\in \mathbb{N} \;| \;D^k \cdot A^{n-k} \not = 0\}.\]
We call D big if $\nu(D, X)=n$.
\end{definition}

\begin{theorem}[\cite{nakayama2}]\label{section}
Let X be a smooth complex projective variety and $\mathcal{E}$ a rank r vector bundle on X. 
Assume that the relative anti-canonical divisor $-K_{\pi}=r\Lambda_{\mathcal{E}}$ of projective space bundle $\pi: \mathbb{P}_X(\mathcal{E})\rightarrow X$
is nef. 
Then $\nu(-K_{\pi})=r-1$.
In particular the normalized tautological divisor cannot be nef and big.
\end{theorem}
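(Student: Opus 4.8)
The plan is to prove the two inequalities $\nu(\Lambda_{\mathcal E})\le r-1$ and $\nu(\Lambda_{\mathcal E})\ge r-1$ separately, after reducing to $\Lambda:=\Lambda_{\mathcal E}$ (as $-K_\pi=r\Lambda$, positive scaling leaves the numerical dimension unchanged). The computational backbone is the pushforward calculus on $Y=\mathbb P_X(\mathcal E)$. Setting $h=\tfrac1r\pi^*c_1(\mathcal E)$ so that $\Lambda=\xi_{\mathcal E}-h$, I observe that $\Lambda$ is the tautological class of the normalized $\mathbb Q$-twisted bundle $\mathcal E':=\mathcal E\otimes(\det\mathcal E)^{-1/r}$, whose first Chern class vanishes. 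Consequently $\pi_*\Lambda^{r-1}=[X]$, $\pi_*\Lambda^{r}=0$, and more generally $\pi_*\Lambda^{r-1+s}=s_s(\mathcal E')$ is the degree-$s$ Segre class of $\mathcal E'$; in particular $\pi_*\Lambda^{r+1}=s_2(\mathcal E')=-\tfrac1{2r}\Delta(\mathcal E)$ with $\Delta(\mathcal E)=2rc_2(\mathcal E)-(r-1)c_1(\mathcal E)^2$. These identities are pure intersection theory and use no positivity.

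For the lower bound I would use $\pi_*\Lambda^{r-1}=[X]\neq0$: by the projection formula $\Lambda^{r-1}\cdot\pi^*A_X^{\,n}=A_X^{\,n}>0$ for an ample $A_X$ on $X$, so $\Lambda^{r-1}$ is numerically nonzero; since $\Lambda$ is nef this forces $\Lambda^{r-1}\cdot A^{\,n}>0$ for any ample $A$ on $Y$, i.e. $\nu(\Lambda)\ge r-1$.

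For the upper bound the goal is $\Lambda^{r}\cdot A^{\,n-1}=0$; nefness already gives $\ge0$, so only the reverse inequality is at issue, and I would obtain it by proving the stronger statement $\Lambda^{r}\equiv0$ in $N^{r}(Y)$ by induction on $n=\dim X$. When $n=1$ the Grothendieck relation on a curve collapses to $\xi_{\mathcal E}^{\,r}=\pi^*c_1(\mathcal E)\,\xi_{\mathcal E}^{\,r-1}$, which yields $\Lambda^{r}=0$ identically. For $n\ge2$ I restrict to $\pi^{-1}(D)=\mathbb P_D(\mathcal E|_D)$ for a general member $D$ of a very ample system on $X$: the restriction of $\Lambda$ remains nef, so by induction $\Lambda^{r}$ vanishes numerically on $\pi^{-1}(D)$, whence $\Lambda^{r}\cdot\pi^*\beta\equiv0$ for every class $\beta$ on $X$ of positive codimension. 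Using the basis $1,\xi_{\mathcal E},\dots,\xi_{\mathcal E}^{\,r-1}$ of $H^*(Y)$ over $\pi^*H^*(X)$, every monomial in $\Lambda^{r}\cdot\zeta$ carrying a factor $\pi^*(\text{positive codimension})$ dies, so the whole claim collapses to the single number $\Lambda^{r}\cdot\xi_{\mathcal E}^{\,n-1}$ (when $n>r$ even this reduces to pullback factors via the Grothendieck relation and the induction closes formally).

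The heart of the matter is therefore the one inequality $\Lambda^{r}\cdot\xi_{\mathcal E}^{\,n-1}\le0$ in the range $2\le n\le r$. Expanded through the pushforward formulas it is an explicit positive combination of the numbers $s_{n-b}(\mathcal E')\cdot c_1(\mathcal E)^{b}$ with $n-b\ge2$, that is, of the higher characteristic numbers of the normalized bundle. To control their sign I would invoke Miyaoka's criterion: since $\Lambda$ restricts to a nef class on $\mathbb P_C(\mathcal E|_C)$ for every complete-intersection curve $C\subset X$, the bundle $\mathcal E$ is semistable with respect to every polarization (Mehta--Ramanathan), and then feed this into the Bogomolov--Miyaoka inequalities. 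For $n=2$ this is exactly Bogomolov's inequality: here $\Lambda^{r}\cdot A=-\tfrac{a}{2r}\Delta(\mathcal E)$ for $A=a\,\xi_{\mathcal E}+\pi^*B$ ample, so nefness ($\Delta(\mathcal E)\le0$) and semistability ($\Delta(\mathcal E)\ge0$) pinch $\Delta(\mathcal E)=0$, giving $\Lambda^{r}\cdot A=0$; the higher cases need Miyaoka's analogous Chern-class inequalities for (strongly) semistable bundles, and this is where the genuine obstacle — and the essential char $p$ input — lies. Granting $\Lambda^{r}\cdot\xi_{\mathcal E}^{\,n-1}\le0$, together with the nef bound $\ge0$ we get $\Lambda^{r}\cdot A^{\,n-1}=0$, hence $\nu(-K_\pi)=\nu(\Lambda)=r-1$; since $r-1<\dim Y=n+r-1$ for $n\ge1$, the divisor $-K_\pi$ is nef but not big. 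A technical point to record, which legitimises both the induction and the passage to a single ample class, is the equivalence for a nef divisor between $\nu(\Lambda)<r$ and the numerical vanishing of $\Lambda^{r}$, resting on the Khovanskii--Teissier inequalities.
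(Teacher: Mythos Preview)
The paper does not give its own proof of this theorem; it is quoted from Nakayama's monograph and used as a black box (the only argument nearby is the short positive-characteristic calculation in Proposition~1.5, which addresses only non-bigness, not the exact value $\nu=r-1$). So there is nothing in the paper to compare your proposal against directly.

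On the merits of your outline: the lower bound $\nu(\Lambda)\ge r-1$ via $\pi_*\Lambda^{r-1}=[X]$ is correct, and your inductive reduction of the upper bound to the single number $\Lambda^r\cdot\xi_{\mathcal E}^{\,n-1}$ (for $2\le n\le r$) is sound. The genuine gap is exactly where you flag it. For $n=2$ the quantity is $-\tfrac{1}{2r}\Delta(\mathcal E)$ and Bogomolov closes the argument. For $n\ge3$ the expression involves $s_3(\mathcal E'),\ldots,s_n(\mathcal E')$, and there is no off-the-shelf ``Miyaoka inequality'' that controls those higher Segre numbers in the combination you need; deferring to unspecified ``analogous Chern-class inequalities'' is deferring to precisely the content of the theorem. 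In Nakayama's actual argument the mechanism is different: he first establishes the filtration by projectively flat subbundles of common slope (what the paper records as Theorem~\ref{nakayama}(3)), via Hermitian--Einstein/Kobayashi--Hitchin methods, and then $c(\mathcal E')\equiv1$ numerically, so every $s_k(\mathcal E')$ vanishes and $\Lambda^r\equiv0$ drops out at once. Your route tries to avoid that analytic input and replace it by a hierarchy of Bogomolov-type inequalities; as written, that hierarchy is asserted but not supplied. A smaller technical point: the equivalence ``$\nu(\Lambda)<r\Leftrightarrow\Lambda^r\equiv0$ in $N^r(Y)$'' that you invoke is stronger than what Khovanskii--Teissier gives (those inequalities only kill intersections with products of \emph{nef} classes); for your purposes it suffices, since $\xi$ is a difference of ample classes on $Y$, but the inductive step should be phrased against nef test classes rather than claiming full numerical triviality of $\Lambda^r$.
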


In positive characteristic case we can also have a similar statement.

\begin{proposition}$($char$\;>0)$
Let X be a smooth projective variety over an algebraic closed field of positive characteristic and let $\mathcal{E}$ be a rank r vector bundle on X. 
Assume that the normalized tautological divisor $\Lambda_{\mathcal{E}}$ is nef. Then $\Lambda_{\mathcal{E}}$ cannot be big.
\end{proposition}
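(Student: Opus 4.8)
The plan is to reduce the assertion to the vanishing of the top self-intersection number of $\Lambda_{\mathcal{E}}$ and to extract this from the positivity theory of nef vector bundles in positive characteristic. First I would normalize: replacing $\mathcal{E}$ by the $\mathbb{Q}$-twisted bundle $\mathcal{E}_{0}:=\mathcal{E}\otimes(\det\mathcal{E})^{-1/r}$, which does not change $\mathbb{P}_{X}(\mathcal{E})$, one gets $c_{1}(\mathcal{E}_{0})=0$ and $\Lambda_{\mathcal{E}}=\xi_{\mathcal{E}_{0}}$, so the hypothesis says precisely that $\mathcal{E}_{0}$ is a nef $\mathbb{Q}$-bundle with numerically trivial determinant. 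Writing $n=\dim X$ and $N=\dim\mathbb{P}_{X}(\mathcal{E})=n+r-1$, the nef divisor $\Lambda_{\mathcal{E}}$ is big if and only if $\Lambda_{\mathcal{E}}^{N}>0$, and by the projection formula $\Lambda_{\mathcal{E}}^{N}=\pi_{*}\xi_{\mathcal{E}_{0}}^{\,N}$ is the top Segre number of $\mathcal{E}_{0}$, a universal polynomial in the $c_{i}(\mathcal{E}_{0})$. Thus it suffices to prove $\Lambda_{\mathcal{E}}^{N}=0$; since nefness already gives $\Lambda_{\mathcal{E}}^{N}\geq 0$, the real content is the reverse inequality, and I will in fact aim to show that every $c_{i}(\mathcal{E}_{0})$ vanishes numerically (so that $\nu(\Lambda_{\mathcal{E}})=r-1<N$, in parallel with Theorem \ref{section}).

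Next I would record the Frobenius behaviour that makes the positive-characteristic case work. For the relative Frobenius $F_{e}:\mathbb{P}_{X}(\mathcal{E})\to\mathbb{P}_{X}((F_{X}^{e})^{*}\mathcal{E})$ over $X$, associated to the $e$-th power $F_{X}^{e}$ of the absolute Frobenius of $X$, one has the identity $F_{e}^{*}\Lambda_{(F_{X}^{e})^{*}\mathcal{E}}=p^{e}\,\Lambda_{\mathcal{E}}$; since $F_{e}$ is finite and surjective, nefness of $\Lambda_{\mathcal{E}}$ is equivalent to nefness of all of its Frobenius pull-backs, so $\mathcal{E}_{0}$ is strongly semistable. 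This is the substitute for the analytic tools available in characteristic zero.

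The heart of the argument is to show that all Chern classes of $\mathcal{E}_{0}$ vanish numerically. One half is elementary and characteristic-free: by Fulton--Lazarsfeld positivity the Chern classes $c_{i}(\mathcal{E}_{0})$ and the Segre classes $\pi_{*}\xi_{\mathcal{E}_{0}}^{\,r-1+i}$ of the nef bundle $\mathcal{E}_{0}$ are both pseudoeffective; comparing them and using $c_{1}(\mathcal{E}_{0})=0$ forces the \emph{even} Chern classes to vanish. The odd Chern classes are the obstacle, and this is the main difficulty of the proof. In characteristic zero they die by the Demailly--Peternell--Schneider theorem (a nef bundle with numerically trivial determinant is numerically flat), but that proof is analytic. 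In characteristic $p$ I would instead use the structure theory of strongly semistable bundles: restrict $\mathcal{E}_{0}$ to a general complete intersection curve, where a nef bundle of degree zero is numerically flat and hence has nef dual, and propagate this back to $X$ by Mehta--Ramanathan together with the Bogomolov inequality for strongly semistable bundles, concluding that $\mathcal{E}_{0}$ is numerically flat, i.e.\ that both $\mathcal{E}_{0}$ and $\mathcal{E}_{0}^{\vee}$ are nef. Applying Fulton--Lazarsfeld to both bundles then kills the remaining (odd) Chern classes.

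Finally, once all $c_{i}(\mathcal{E}_{0})$ are numerically zero, the Segre number $\pi_{*}\xi_{\mathcal{E}_{0}}^{\,N}$ vanishes, whence $\Lambda_{\mathcal{E}}^{N}=0$ and $\Lambda_{\mathcal{E}}$ cannot be big. The reduction to the normalized bundle, the Frobenius identity, and the closing Chern-class computation are all routine; the entire weight of the proof lies in establishing the numerical flatness of $\mathcal{E}_{0}$ in positive characteristic, where the characteristic-zero argument is unavailable and strong semistability must take its place.
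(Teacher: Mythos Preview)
Your argument is essentially correct but dramatically heavier than what the proposition requires, and the paper's proof is a two–line computation by comparison.

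The paper argues by contradiction: if $\Lambda_{\mathcal{E}}$ were nef and big, then for a \emph{general} curve $f:C\to X$ the pullback $\Lambda_{f^{*}\mathcal{E}}=\bar f^{*}\Lambda_{\mathcal{E}}$ on $\mathbb{P}_{C}(f^{*}\mathcal{E})$ would still be nef and big (Kodaira's lemma gives $m\Lambda_{\mathcal{E}}\sim A+E$ with $A$ ample and $E$ effective, and a general $\pi^{-1}(C)$ is not contained in $\operatorname{Supp}E$). But on a curve one computes directly
\[
(r\Lambda_{f^{*}\mathcal{E}})^{r}=(r\xi-\pi^{*}c_{1}(f^{*}\mathcal{E}))^{r}
=r^{r}\xi^{r}-r^{r}\,\xi^{r-1}\!\cdot\pi^{*}c_{1}(f^{*}\mathcal{E})=0,
\]
since $(\pi^{*}c_{1})^{2}=0$ on a curve and $\xi^{r}=\deg c_{1}(f^{*}\mathcal{E})$. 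That is the whole proof.

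Your route—normalize to $c_{1}=0$, deduce strong semistability from Frobenius, kill the even Chern classes by Fulton--Lazarsfeld, then establish numerical flatness to kill the odd ones—does work and in fact yields the stronger conclusion $\nu(\Lambda_{\mathcal{E}})=r-1$; this is essentially the content of the paper's later Theorem~\ref{positive}, which genuinely requires Langer's input. Two remarks, though. First, your ``main difficulty'' step is not carried by the tools you name: Mehta--Ramanathan transports semistability \emph{from} $X$ \emph{to} curves, not the other way, and the Bogomolov inequality does not by itself lift nefness of the dual from curves to $X$. What actually makes that step go is either a direct appeal to Langer's theorem (strongly semistable with $c_{1}\cdot A^{n-1}=c_{2}\cdot A^{n-2}=0$ implies numerically flat), or the elementary observation that a nef bundle of degree~$0$ on \emph{every} curve has nef dual on every curve, whence $\mathcal{E}_{0}^{\vee}$ is nef on $X$. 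Second, all of this is unnecessary for the proposition: restricting to a curve eliminates every Chern class beyond $c_{1}$ in one stroke, so the Segre computation collapses without any positivity theory.
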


\begin{proof}
If $\Lambda_{\mathcal{E}}$ is big, we consider the diagram 
\[\xymatrix{
 \mathbb{P}_C(f^*\mathcal{E})\ar[rr]^{\bar{f}} \ar[d]^/1mm/{\pi|_C}  & & \mathbb{P}_X(\mathcal{E}) \ar[d]^/1mm/{\pi}\\
 C \ar[rr]^f&&  X\\
}\] 
for a general curve $f: C\rightarrow X$. Then $\Lambda_{f^*\mathcal{E}}=\bar{f}^*\Lambda_{\mathcal{E}}$ is also nef and big.
But we have $(r\Lambda_{f^*\mathcal{E}})^r=(r \xi_{f^*\mathcal{E}}- \pi|_C ^{*}(det(\mathcal{f^*E})))^r=r^r\xi_{f^*\mathcal{E}}^r+r^r\xi_{f^*\mathcal{E}}^{r-1}\cdot \pi|_C ^{*}(det(\mathcal{f^*E}))=0$. This contradicts the bigness of $\Lambda_{f^*\mathcal{E}}$.
\end{proof}

\section{vector bundles with semiample normalized tautological divisor}

First we consider the case where the normalized tautological divisor of projective bundle $\pi: \mathbb{P}_X(\mathcal{E})\rightarrow X$ is semiample. 
Such a case really exists. For example let $\mathcal{E} \cong \mathcal{O}^r_X$ be a trivial vector bundle on smooth projective variety X,  the relative anti-canonical divisor $-K_{\pi}=p^{*}(-K_{\mathbb{P}^{r-1}})$ is basepoint-free where $p :  \mathbb{P}_X(\mathcal{E})\cong X \times \mathbb{P}^{r-1} \rightarrow \mathbb{P}^{r-1}$ is the second projection.
Therefore the normalized tautological divisor is also nef. 
In the case where dimX$=1$, N. Nakayama showed the following theorem. 

\begin{theorem}[\cite{nakayama1}]
Let $\pi : X=\mathbb{P}_C(\mathcal{E}) \rightarrow C$ be a $\mathbb{P}^1$-bundle over a smooth curve C.
Then following two conditions are equivalent:
 \begin{enumerate}
\item there is a finite \'etale morphism $f: \tilde{C} \rightarrow C$ such that $X \times _{C} \tilde{C} \cong \mathbb{P}^1 \times \tilde{C} $ over $\tilde{C}$;
\item the normalized tautological line bundle $\Lambda_{\mathcal{E}}$ is semiample.
\end{enumerate}
\end{theorem}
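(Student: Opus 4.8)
The plan is to prove the substantial implication $(2)\Rightarrow(1)$ geometrically, by using the semiample fibration attached to $\Lambda_{\mathcal{E}}$ as a \emph{second} ruling of the ruled surface $X$, and to dispose of $(1)\Rightarrow(2)$ by descent of semiampleness. Throughout, $\mathcal{E}$ has rank $2$ and $\ell\cong\mathbb{P}^1$ denotes a fibre of $\pi$.

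First I would set up the second fibration. Assume $\Lambda_{\mathcal{E}}$ is semiample. A direct computation gives $\Lambda_{\mathcal{E}}^2=0$, so the semiample fibration has one-dimensional image (compare Theorem~\ref{section}): a suitable multiple $m\Lambda_{\mathcal{E}}$ is base-point free and, after Stein factorization, defines a morphism $\phi:X\to B$ onto a smooth curve $B$ with connected fibres and $m\Lambda_{\mathcal{E}}=\phi^*H$ for some ample $H$ on $B$. Since $2\Lambda_{\mathcal{E}}\cdot\ell=2\xi_{\mathcal{E}}\cdot\ell-\pi^*(\det\mathcal{E})\cdot\ell=2$, we have $\Lambda_{\mathcal{E}}\cdot\ell=1$, so $\phi|_\ell$ is nonconstant; thus $\ell\cong\mathbb{P}^1$ dominates $B$ and therefore $B\cong\mathbb{P}^1$. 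Hence $X$ carries the two fibrations $\pi$ and $\phi$, and $(\pi,\phi):X\to C\times\mathbb{P}^1$ is finite surjective.

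The crux is to show that the fibres of $\phi$ are \emph{étale} over $C$. Let $F$ be a general fibre of $\phi$; by generic smoothness it is smooth and connected, hence irreducible, and since $\Lambda_{\mathcal{E}}\cdot\ell=1$ it meets each ruling in $e\ge 1$ points, so $f:=\pi|_F:F\to C$ is finite of degree $e$. The key input is the identity $K_X=-2\Lambda_{\mathcal{E}}+\pi^*K_C$, which follows from $K_X=-2\xi_{\mathcal{E}}+\pi^*(K_C+\det\mathcal{E})$ and the definition of $\Lambda_{\mathcal{E}}$. As $F$ is a fibre we have $\Lambda_{\mathcal{E}}\cdot F=0$ and $F^2=0$, so adjunction gives $2g_F-2=K_X\cdot F=e(2g_C-2)$. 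Comparing this with Riemann–Hurwitz for $f:F\to C$ forces the ramification divisor to vanish, i.e. $f$ is étale. This is the heart of the argument, and the step I expect to require the most care (ensuring $F$ is genuinely smooth and irreducible, and that the numerical bookkeeping is exact).

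Now set $\tilde C:=F$ and let $f:\tilde C\to C$ be the étale map just produced, so that $Y:=X\times_C\tilde C=\mathbb{P}_{\tilde C}(f^*\mathcal{E})$. The inclusion $F\hookrightarrow X$ together with $\mathrm{id}_F$ yields a section $\Sigma$ of $\pi|_{\tilde C}$ whose image under the étale base change $\bar f:Y\to X$ is precisely the fibre $F=\phi^{-1}(b_0)$. Because $f$ is étale, $\bar f^{-1}(F)=F\times_C F$ is étale over $\tilde C$ and its diagonal $\Sigma$ is a connected component; hence $\Sigma$ is an entire connected fibre of the Stein-reduced fibration $\psi'$ attached to the semiample divisor $\Lambda_{f^*\mathcal{E}}=\bar f^*\Lambda_{\mathcal{E}}$ on $Y$. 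Since $\Sigma$ is a section of $\pi|_{\tilde C}$ and all fibres of $\psi'$ are numerically equivalent, the general fibre of $\psi'$ also meets each ruling once, i.e. is a section. Therefore $(\pi|_{\tilde C},\psi'):Y\to\tilde C\times B'$ is finite and birational onto the smooth surface $\tilde C\times B'$, hence an isomorphism by Zariski's main theorem; as $\pi|_{\tilde C}$ is the first projection with $\mathbb{P}^1$-fibres we conclude $B'\cong\mathbb{P}^1$ and $Y\cong\mathbb{P}^1\times\tilde C$, which is $(1)$. For the reverse implication $(1)\Rightarrow(2)$, on $\mathbb{P}^1\times\tilde C$ the normalized tautological divisor equals $p_1^*\mathcal{O}_{\mathbb{P}^1}(1)$ and is base-point free; since $\bar f$ is finite surjective and $\Lambda_{f^*\mathcal{E}}=\bar f^*\Lambda_{\mathcal{E}}$, semiampleness of $\Lambda_{\mathcal{E}}$ follows by descent of semiampleness along finite surjective morphisms. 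The only genuinely delicate points are the Riemann–Hurwitz step above and the verification that the diagonal is an honest connected component of the base-changed fibre, which is exactly what upgrades a general multisection into a genuine section.
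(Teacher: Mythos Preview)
Your argument is correct. The paper itself does not prove this statement directly---it is cited from \cite{nakayama1}---but it proves the generalization Theorem~\ref{semiample}, and it is against that proof that yours should be compared.

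The two proofs share the same skeleton: form the semiample fibration $\varphi$, take a general fibre $S$ (your $F$), show that $\pi|_S$ is \'etale over the base, and then base-change to trivialize the bundle. The genuine differences are in the two nontrivial steps. For \'etaleness, the paper argues sheaf-theoretically: it shows $\mathcal{T}_\pi|_S\cong\mathcal{O}_S^{r-1}$ and then chases the diagram relating $\mathcal{N}_{S/Y}^\vee$, $\Omega_Y|_S$, $\Omega_\pi|_S$ and $\Omega_{\pi|_S}$ to conclude $\Omega_{\pi|_S}=0$. You instead exploit the surface situation and obtain \'etaleness from adjunction $2g_F-2=K_X\cdot F=e(2g_C-2)$ combined with Riemann--Hurwitz, which is more elementary but is specific to the rank-$2$-over-a-curve setting. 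For the trivialization after base change, the paper iterates the construction to accumulate ``sufficiently many disjoint sections''; you avoid iteration by observing that the diagonal in $F\times_C F$ is a full connected fibre of the pulled-back semiample fibration and hence a section, so that $(\pi|_{\tilde C},\psi')$ is finite birational onto the smooth product and therefore an isomorphism by Zariski's Main Theorem. This is a clean shortcut available in the $\mathbb{P}^1$-bundle case. In short: your route is more hands-on and tailored to the surface case, while the paper's route is designed to work uniformly in all ranks and dimensions.

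Two minor remarks on presentation. Your sentence ``since $\Lambda_{\mathcal{E}}\cdot\ell=1$ it meets each ruling in $e\ge1$ points'' slightly conflates $\Lambda_{\mathcal{E}}$ with $F$; what you need (and use) is only that $F\cdot\ell>0$, which follows from $F$ being numerically a positive multiple of $\Lambda_{\mathcal{E}}$. And in $(1)\Rightarrow(2)$, the descent of semiampleness along the finite surjective $\bar f$ is via the norm map on sections, which is worth saying explicitly.
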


\begin{remark}
Forthermore N. Nakayama proved very interesting fact; for $\mathbb{P}^1$-bundle $\mathbb{P}_C(\mathcal{E})$ over a smooth curve C of genus $g(C)>1$ following two conditions are equivalent:
\begin{enumerate}
\item the normalized tautological divisor $\Lambda_{\mathcal{E}}$ is semiample.
\item X has a surjective endomorphism $g: X \rightarrow X$ that is not isomorphism.
\end{enumerate}
\end{remark}

We generalize the result of N. Nakayama to arbitrary rank vector bundles and arbitrary dimensional base manifolds.

\begin{theorem}\label{semiample}
Let X be a smooth projective variety and $\mathcal{E}$ a rank r vector bundle on X. 
Assume that the normalized tautological divisor $\Lambda_{\mathcal{E}}$ of projective space bundle $\pi: Y=\mathbb{P}_X(\mathcal{E})\rightarrow X$
is semiample. 
Then there exist a finite \'etale morphism $f : X^{\prime} \rightarrow X$ such that $f^*\mathcal{E}$ is trivial up to twist by some line bundle.
\end{theorem}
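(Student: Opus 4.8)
The plan is to realise $Y=\mathbb{P}_X(\mathcal{E})$ as a finite cover of $X\times Z$, where $Z$ is the base of the semiample fibration of $\Lambda_{\mathcal{E}}$, and then to trivialise this cover by pulling back along a finite étale morphism that kills its monodromy over $X$. First I would choose $m\gg 0$ so that $m\Lambda_{\mathcal{E}}$ is base-point-free and pass to the associated morphism $\phi\colon Y\to Z$ with connected fibres onto a normal projective variety $Z$, with $m\Lambda_{\mathcal{E}}=\phi^{*}A$ for some ample divisor $A$ on $Z$. Since $\Lambda_{\mathcal{E}}$ is semiample, its numerical dimension equals $\dim\phi(Y)$, so Theorem~\ref{section} gives $\dim Z=\nu(\Lambda_{\mathcal{E}})=\nu(-K_{\pi})=r-1$, the numerical dimension being unchanged by the positive scaling $-K_{\pi}=r\Lambda_{\mathcal{E}}$. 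On a fibre $\pi^{-1}(x)\cong\mathbb{P}^{r-1}$ the definition of $\Lambda_{\mathcal{E}}$ gives $\Lambda_{\mathcal{E}}|_{\pi^{-1}(x)}=\xi_{\mathcal{E}}|_{\pi^{-1}(x)}=\mathcal{O}_{\mathbb{P}^{r-1}}(1)$, which is ample; hence $\phi|_{\pi^{-1}(x)}$ pulls $A$ back to an ample divisor and is therefore finite, and being a morphism between projective varieties of the same dimension $r-1$ it is finite and surjective onto $Z$. Writing $g_x\colon\mathbb{P}^{r-1}\to Z$ for this restriction, it follows that $\Phi:=(\pi,\phi)\colon Y\to X\times Z$ is proper with finite fibres, hence finite, and surjective; let $\delta=\deg g_x$ be its degree.

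The second step is to see that this family of covers is essentially constant in $x$. A general fibre $F=\phi^{-1}(z)$ is smooth of dimension $\dim X$, and $\pi|_F\colon F\to X$ is finite of degree $\delta$; at a general point $y\in F$ one has $T_yF=\ker(d\phi_y)$ and $T_yF\cap\ker(d\pi_y)=T_y\bigl(F\cap\pi^{-1}(x)\bigr)=0$, because $F\cap\pi^{-1}(x)=g_x^{-1}(z)$ is finite, so $d\pi_y\colon T_yF\to T_xX$ is an isomorphism and $\pi|_F$ is generically étale. More structurally, the $g_x$ are finite covers of the \emph{fixed} normal variety $Z$ of fixed degree $\delta$ whose common source $\mathbb{P}^{r-1}$ is rigid; I would argue that this family is isotrivial, so that the branch divisor of $\Phi$ has the form $X\times B_0$ and, away from it, $\Phi$ is an étale family over $X$ of a single cover $g\colon\mathbb{P}^{r-1}\to Z$.

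It then remains to trivialise the monodromy. The isotrivial family $\{g_x\}_{x\in X}$ is classified by a representation $\rho\colon\pi_1(X)\to\mathrm{Aut}(g)$, where $\mathrm{Aut}(g)=\{h\in\mathrm{PGL}_r : g\circ h=g\}$ is finite because any such $h$ permutes the finite fibres of $g$. Let $f\colon X'\to X$ be the finite étale cover corresponding to $\ker\rho$. Over $X'$ the family becomes constant, so $Y'=Y\times_XX'\cong X'\times\mathbb{P}^{r-1}$ compatibly with the projections to $X'$; equivalently the $\mathbb{P}^{r-1}$-bundle $\mathbb{P}_{X'}(f^{*}\mathcal{E})=Y'$ is trivial, which is exactly the statement that $f^{*}\mathcal{E}\cong\mathcal{L}^{\oplus r}$ for some line bundle $\mathcal{L}$, i.e. $f^{*}\mathcal{E}$ is trivial up to twist.

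The hard part is the isotriviality and finiteness of monodromy asserted in the second and third steps: one must show that the covers $g_x\colon\mathbb{P}^{r-1}\to Z$ genuinely fit into a locally constant family over $X$ with finite structure group, i.e. that the multisection $\pi|_F$ is étale (not \emph{merely} generically so) and splits after base change. This is where the rigidity of $\mathbb{P}^{r-1}$ (the vanishing $H^{1}(\mathbb{P}^{r-1},T_{\mathbb{P}^{r-1}})=0$), the nefness of $\Lambda_{\mathcal{E}}$ forcing $\Lambda_{\mathcal{E}}|_F\equiv 0$ on every fibre, and purity of the branch locus must be combined; controlling the non-reduced and reducible special fibres of $\phi$ is the principal technical obstacle.
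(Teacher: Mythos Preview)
Your setup coincides with the paper's through the construction of $\phi\colon Y\to Z$ with $\dim Z=r-1$ and the choice of a general fibre $S=\phi^{-1}(z)$, and you correctly identify that the crux is upgrading the generic \'etaleness of $\pi|_S\colon S\to X$ to \'etaleness everywhere. But you leave this as an acknowledged obstacle, appealing to isotriviality of the family $\{g_x\}$ via rigidity of $\mathbb{P}^{r-1}$; note that $H^1(\mathbb{P}^{r-1},T_{\mathbb{P}^{r-1}})=0$ governs deformations of $\mathbb{P}^{r-1}$ as a variety, not deformations of the finite maps $g_x\colon\mathbb{P}^{r-1}\to Z$, so this rigidity does not directly yield what you need, and your sketch gives no mechanism to rule out ramification of $\pi|_S$ over a proper subvariety of $X$.

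The paper closes exactly this gap by a direct sheaf argument that bypasses isotriviality and monodromy. Restricting the composite $\mathcal{T}_\pi\to\mathcal{T}_Y\to\varphi^*\mathcal{T}_Z$ to $S$ (after replacing $Z$ by an ambient $\mathbb{P}^N$) yields a generically injective map $\mathcal{T}_\pi|_S\to\mathcal{O}_S^{r-1}$ between bundles of the same rank. Its determinant is a nonzero map $-K_\pi|_S\to\mathcal{O}_S$; since $-K_\pi=r\Lambda_{\mathcal{E}}$ is pulled back along $\varphi$ and $S$ is a fibre, $-K_\pi|_S$ is torsion, so this nonzero map is an isomorphism and hence $\mathcal{T}_\pi|_S\cong\mathcal{O}_S^{r-1}$. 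A short diagram chase with the conormal sequence of $S\subset Y$ then shows that the image of $\varphi^*\Omega_Z|_S$ in $\Omega_Y|_S$ lands in $\mathcal{N}_{S/Y}^\vee$ and already surjects onto $\Omega_\pi|_S$, forcing $\Omega_{\pi|_S}=0$; thus $\pi|_S$ is \'etale everywhere.

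The paper's endgame also differs from yours: rather than trivialising in one stroke via $\ker\rho$, it base-changes along the \'etale cover $\pi|_S\colon S\to X$ to obtain a section of $\mathbb{P}_S((\pi|_S)^*\mathcal{E})\to S$ (the diagonal), and iterates until enough disjoint sections split the bundle. Your monodromy packaging would work once \'etaleness of $\pi|_S$ is established, but the substantive content of the theorem is precisely the determinant trick above, which your proposal does not supply.
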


\begin{proof}
From the semiampleness of $\Lambda_{\mathcal{E}}$ we have a fibration $\varphi:Y \rightarrow Z\subseteq \mathbb{P}^N$ defined by the basepoint-free divisor 
$m\Lambda_{\mathcal{E}}$ for $m\gg0$ and 
the Stein factorization. 
\[\xymatrix{
Y=\mathbb{P}_{X}(\mathcal{E}) \ar[rr]_{\varphi} \ar[d]_{\pi}  & &Z\subseteq\mathbb{P}^N \\
{X}   \\
}\]
By Theorem \ref{section} we have dim\;Im$\varphi(Y)=r-1$. 
We may asuume that Z is smooth by the replacement Z with $\mathbb{P}^N$. 
Let $S=\varphi^{-1}(z)$ be a general fiber of $\varphi$ then S is smooth and $\pi|_S:S \rightarrow X$ is finite surjective. 
We will show that $\pi|_S$ is unramified i.e. $\Omega_S\cong\pi^*\Omega_X|_S$.  
In this situation we have the morphism $\Phi:\mathcal{T}_{\pi} \rightarrow \mathcal{T}_Y\rightarrow \varphi^*\mathcal{T}_Z$.
By the restriction of this morphism to S we have the generically injective morphism $\mathcal{T}_{\pi}|_S\rightarrow \varphi^*\mathcal{T}_Z|_S\cong \mathcal{O}_S^{N}$
since $\varphi$ is generically smooth.
Taking some direct summand we have a generically injective morphism between vector bundles of same rank $\mathcal{T}_{\pi}|_S\rightarrow \mathcal{O}_S^{r-1}$.
From the semiampleness of $\Lambda_{\mathcal{E}}$ we know that the determinant morphism $-K_{\pi}|_S\rightarrow \mathcal{O}_S$ is isomorphism. 
Therefore we get an isomorphism $\mathcal{T}_{\pi}|_S\cong \mathcal{O}_S^{r-1}$.
Taking the dual we also have an isomorphism $\Omega_{\pi}|_S\cong \mathcal{O}_S^{r-1}$. 
Next we consider the exact sequence 
\[0 \rightarrow (\pi|_S)^*\Omega_X \rightarrow \Omega_S \rightarrow \Omega_{\pi|_S} \rightarrow 0.\]
We will show that $\Omega_{\pi|_S}=0$.
From the argument mentioned above we know the morphism $\Phi^{\vee}:\varphi^*\Omega_Z|_S\cong \mathcal{O}_S^N \rightarrow \Omega_Y|_S\rightarrow \Omega_{\pi}|_S\cong \mathcal{O}_S^{r-1}$ is surjective.
Since S is smooth subscheme of Y we have an exact sequence 
 \[0 \rightarrow \mathcal{N}_{S/Y}^{\vee} \rightarrow \Omega_Y|_S \rightarrow \Omega_S \rightarrow 0.\]
 The image of $\varphi^*\Omega_Z|_S$ in $\Omega_Y|_S$ is contained in $\mathcal{N}_{S/Y}^{\vee}$ since the image of $\varphi^*\Omega_Z|_S$ in $\Omega_S$ is 0. 
 Therefore we have a surjection $\mathcal{N}_{S/Y}^{\vee}\rightarrow \Omega_{\pi}|_S \rightarrow 0.$
From the following commutative diagram we have $\Omega_{\pi|_S}=0$.
\[\xymatrix{
0 \ar[rr]_{} & & \mathcal{N}_{S/Y}^{\vee} \ar[rr] \ar[d]  & & \Omega_S \ar[d]\\
& &  \Omega_{\pi}|_S \ar[rr] \ar[d] &&   \Omega_{\pi|_S} \ar[rr] \ar[d] & &0\\
&& 0 && 0\\
}\] 
Therefore we know that $\pi|_S:S \rightarrow X$ is \'etale.
Taking a base-change by the \'etale morphism $\pi|_S:S \rightarrow X$, we have a finite \'etale morphism $g: \mathbb{P}_S(\pi|_S^*(\mathcal{E}))\rightarrow \mathbb{P}_X(\mathcal{E})$ and the natural projection $\tilde{\pi}: \mathbb{P}_S(\pi|_S^*(\mathcal{E}))\rightarrow S$ has a smooth section.
By using this argument repeatedly, we have a finite \'etale morphism $f : X^{\prime} \rightarrow X$ such that the natural projection 
$\pi^{\prime}:\mathbb{P}_{X^{\prime}}(f^*(\mathcal{E}))\rightarrow X^{\prime}$ has sufficiently many disjoint sections. Therefore we can show
that $\mathbb{P}_{X^{\prime}}(f^*(\mathcal{E}))\cong X^{\prime}\times \mathbb{P}^{r-1}$ and we have $f^*\mathcal{E}$ is trivial up to twist by a line bundle.
\end{proof}

By the similar way we can prove the following Proposition in positive characteristic.

\begin{proposition}$($char$\;>0)$
Let X be a smooth projective variety over algebraic closed field of positive characteristic and $\mathcal{E}$ a rank r vector bundle on X. 
Assume that the normalized tautological divisor of projective space bundle $\pi: Y=\mathbb{P}_X(\mathcal{E})\rightarrow X$
is semiample. 
Then there exist a finite morphism $f : X^{\prime} \rightarrow X$ from normal variety such that $f^*\mathcal{E}$ is trivial up to twist by a line bundle.
\end{proposition}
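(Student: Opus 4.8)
The plan is to imitate the proof of Theorem \ref{semiample}, isolating the two places where characteristic zero was used and replacing them by weaker, characteristic-free substitutes. As before, semiampleness of $\Lambda_{\mathcal{E}}$ produces a fibration $\varphi : Y = \mathbb{P}_X(\mathcal{E}) \to Z$, the morphism attached to $|m\Lambda_{\mathcal{E}}|$ for $m \gg 0$ followed by Stein factorization (we may take $Z = \mathbb{P}^N$ as in Theorem \ref{semiample}). The first step is to check that $\dim Z = r-1$. Since Theorem \ref{section} was stated only in characteristic zero, this must be re-derived here: the inequality $\dim Z = \nu(\Lambda_{\mathcal{E}}) \geq r-1$ holds because $\Lambda_{\mathcal{E}}$ restricts to the hyperplane class on each fibre $\mathbb{P}^{r-1}$ of $\pi$, so $\Lambda_{\mathcal{E}}^{r-1}$ is nonzero, while the reverse inequality follows by restricting to a general complete-intersection curve $C \subseteq X$ and using the computation $(r\Lambda_{\mathcal{E}|_C})^r = 0$ from the proof of the preceding non-bigness Proposition, which forces $\nu = r-1$ on $\mathbb{P}_C(\mathcal{E}|_C)$. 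Granting $\dim Z = r-1$, a general fibre $S = \varphi^{-1}(z)$ has $\dim S = \dim X$, and $\pi|_S : S \to X$ is finite and surjective: indeed $\varphi$ is finite on every $\pi$-fibre $\mathbb{P}^{r-1}$ because $\Lambda_{\mathcal{E}}$ is ample there, so $S$ meets each $\pi$-fibre in a finite set, $\pi|_S$ contracts no curve, and $\pi|_S$ is therefore quasi-finite and proper, hence finite onto $X$.

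Here the argument must diverge from the characteristic-zero case. In Theorem \ref{semiample} generic smoothness ensured that $S$ is smooth and that $\pi|_S$ is \'etale, via the differential computation $\Omega_{\pi|_S} = 0$. In characteristic $p$ generic smoothness can fail: $S$ may be singular and $\varphi$, hence $\pi|_S$, may be inseparable, so no \'etale conclusion is available. I would therefore replace $S$ by the normalization $S'$ of a reduced irreducible component of a general fibre, with normalization map $\eta : S' \to S$. Then $S'$ is a \emph{normal} variety and $h := \pi|_S \circ \eta : S' \to X$ is finite and surjective. This is precisely the weaker conclusion, a finite morphism from a normal variety, that appears in the statement, and it is the best that survives the failure of separability.

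The section-producing mechanism, by contrast, needs neither smoothness nor separability. Base-changing $Y \to X$ along $h$ gives $\mathbb{P}_{S'}(h^*\mathcal{E}) \cong Y \times_X S'$, and the map $S' \to Y$ obtained by composing $\eta$ with the inclusion $S \hookrightarrow Y$, together with $h : S' \to X$, defines by the universal property of the fibre product a section of $\mathbb{P}_{S'}(h^*\mathcal{E}) \to S'$, i.e. a line-bundle quotient of $h^*\mathcal{E}$. Since fibres of $\varphi$ over distinct points of $Z$ are disjoint in $Y$, their associated sections are disjoint, and pulling sections back along further finite base-changes keeps them disjoint. Iterating the construction, at each stage choosing a general fibre of the pulled-back fibration on $\mathbb{P}_{S'}(h^*\mathcal{E})$, normalizing, and passing to the resulting finite cover so as to remain in the category of normal varieties, yields a finite morphism $f : X' \to X$ from a normal variety over which $\mathbb{P}_{X'}(f^*\mathcal{E}) \to X'$ acquires sufficiently many pairwise disjoint sections. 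Exactly as in Theorem \ref{semiample}, enough disjoint sections in general position force $\mathbb{P}_{X'}(f^*\mathcal{E}) \cong X' \times \mathbb{P}^{r-1}$, so that $f^*\mathcal{E}$ is trivial up to twist by a line bundle.

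The main obstacle is the failure of generic smoothness: one must be content with a normal, possibly singular, $S'$ and an inseparable $h$, and one must verify that the normalized general fibres still deliver enough sections in sufficiently general position to split $f^*\mathcal{E}$, as well as check that normality can be preserved throughout the iteration by normalizing after each base change. A secondary point requiring care is the dimension count $\dim Z = r-1$, which in the absence of a characteristic-$p$ version of Theorem \ref{section} must be extracted from the rank-one-base computation underlying the non-bigness Proposition.
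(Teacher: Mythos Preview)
Your approach is precisely what the paper intends: it gives no separate argument for this proposition, only the line ``By the similar way we can prove the following Proposition in positive characteristic,'' so re-running the proof of Theorem~\ref{semiample} while replacing the \'etale fibre by the normalization of an irreducible component of a general $\varphi$-fibre is exactly the intended route, and your handling of the section-producing step and the iteration (normalizing after each base change) is correct.

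There is one genuine gap in your dimension count. From $(r\Lambda_{\mathcal{E}|_C})^r=0$ you correctly deduce $\nu(\Lambda_{\mathcal{E}|_C},\mathbb{P}_C(\mathcal{E}|_C))=r-1$, but this does \emph{not} by itself give $\nu(\Lambda_{\mathcal{E}},Y)\le r-1$: the vanishing you obtain is $\Lambda_{\mathcal{E}}^{\,r}\cdot(\pi^*H)^{n-1}=0$, and $\pi^*H$ is not ample on $Y$, so this does not control $\Lambda_{\mathcal{E}}^{\,r}\cdot A^{n-1}$ for ample $A$. What the curve computation does buy you is that $\dim\varphi(\mathbb{P}_C)=r-1$ for every complete-intersection curve $C\subset X$. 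Since $\Lambda_{\mathcal{E}}$ is ample on each $\pi$-fibre $F_x\cong\mathbb{P}^{r-1}$, the image $\varphi(F_x)$ is an irreducible $(r-1)$-dimensional subvariety of $Z$, and for $x\in C$ it is contained in the irreducible $(r-1)$-dimensional set $\varphi(\mathbb{P}_C)$, hence equal to it. Connecting any two points of $X$ by a chain of such curves shows $\varphi(F_x)$ is independent of $x$, so $\varphi(Y)=\varphi(F_{x_0})$ has dimension $r-1$. With this extra step the argument closes; everything else in your outline goes through.
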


\section{vector bundles with nef normalized tautological divisor}

Next we consider the case where the normalized tautological divisor $\Lambda_{\mathcal{E}}$ is nef.  
To state Theorems proved by Y.Miyaoka and N.Nakayama we review the definition of stability of vector bundles.

\begin{definition}
Let $\mathcal{E}$ be a vector bundle on smooth projective variety X of dimension n and A an ample line bundle on X.
$\mathcal{E}$ is said to be A-semistable in the sense of Takemoto-Mumford if 
\[\mu(\mathcal{F})\leqslant \mu(\mathcal{E})\]
for every non-zero subsheaf $\mathcal{F} \subset \mathcal{E}$ where $\mu(\mathcal{F}):=c_1(\mathcal{F}).A^{n-1}/rank(\mathcal{F})$.
\end{definition}

\begin{definition}
Let $\mathcal{E}$ be a vector bundle on smooth projective variety X of dimension n and A an ample line bundle on X.
We denote the absolute Frobenius morphism by $F_X$.
Then $\mathcal{E}$ is said to be strongly A-semistable if $(F_X)^{(n)}(\mathcal{E})$ is A-semistable for every $n\in\mathbb{N}$
where $(F_X)^{(n)}(\mathcal{E})$ is the n-th pullback by absolute Frobenius morphism.
\end{definition}

\begin{theorem}$($\cite{miyaoka2}, char\;$\geqslant0)$\label{miyaoka}
Let $\mathcal{E}$ be a rank r vector bundle on smooth projective curve C over a field k of characteristic $p\geqslant0$. 

$(1)$ If $p=0$, $\Lambda_{\mathcal{E}}$ is nef if and only if $\mathcal{E}$ is $\mu$-semistable.

$(2)$ If $p>0$, $\Lambda_{\mathcal{E}}$ is nef if and only if $\mathcal{E}$ is strongly $\mu$-semistable.

\end{theorem}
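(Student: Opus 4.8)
The plan is to translate nefness of $\Lambda_{\mathcal{E}}$ into a numerical condition on quotient line bundles over all finite covers of $C$, and then to match that condition with (strong) semistability. First I would note that on a curve $c_i(\mathcal{E})=0$ for $i\ge 2$, so the numerical classes on $Y=\mathbb{P}_C(\mathcal{E})$ are spanned by $\xi_{\mathcal{E}}$ and the fibre class $f=\pi^{*}[\mathrm{pt}]$, with $\Lambda_{\mathcal{E}}=\xi_{\mathcal{E}}-\mu(\mathcal{E})f$ as a $\mathbb{Q}$-class, where $\mu(\mathcal{E})=\deg\mathcal{E}/r$. Any curve inside a fibre meets $\Lambda_{\mathcal{E}}$ positively, since $\Lambda_{\mathcal{E}}|_{\mathbb{P}^{r-1}}=\xi_{\mathcal{E}}|_{\mathbb{P}^{r-1}}$ is ample, so only multisections are at issue. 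For a multisection $\Gamma$ I would normalize to obtain a finite $g:\widetilde{C}\to C$ of degree $e$ and realize $\Gamma$ as a section $\sigma$ of $\mathbb{P}_{\widetilde{C}}(g^{*}\mathcal{E})\to\widetilde{C}$; such sections correspond exactly to quotient line bundles $g^{*}\mathcal{E}\twoheadrightarrow L$, and using $\bar g^{*}\Lambda_{\mathcal{E}}=\Lambda_{g^{*}\mathcal{E}}$ together with $\xi\cdot\sigma=\deg L$ and $f\cdot\sigma=e$ the projection formula gives
\[\Lambda_{\mathcal{E}}\cdot\Gamma=\deg L-e\,\mu(\mathcal{E}).\]
Hence $\Lambda_{\mathcal{E}}$ is nef if and only if $\deg L\ge e\,\mu(\mathcal{E})$ for every finite $g:\widetilde C\to C$ of degree $e$ and every quotient line bundle $g^{*}\mathcal{E}\twoheadrightarrow L$.

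The implication from (strong) semistability to nefness is then the easy half. If $\mathcal{E}$ is $\mu$-semistable in characteristic $0$, every finite pullback $g^{*}\mathcal{E}$ is again semistable of slope $e\,\mu(\mathcal{E})$; if $p>0$ the same holds for every finite $g$—including the inseparable Frobenius covers—precisely when $\mathcal{E}$ is strongly semistable, and this is exactly where the characteristic enters. Since a semistable bundle has every quotient, in particular every quotient line bundle, of slope at least that of the bundle, one gets $\deg L\ge e\,\mu(\mathcal{E})$, so $\Lambda_{\mathcal{E}}$ is nef.

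The converse—that failure of (strong) semistability forces $\Lambda_{\mathcal{E}}$ to be non-nef—is the heart of the argument and the step I expect to be the main obstacle. I would take the minimal quotient $\mathcal{Q}$ of the Harder--Narasimhan filtration (the strong one when $p>0$); it is (strongly) semistable, and after replacing $\mathcal{E}$ by a Frobenius pullback $F_C^{n*}\mathcal{E}$ if necessary—a replacement that preserves non-nefness of $\Lambda_{\mathcal{E}}$—I may assume $\mathcal{Q}$ is an honest quotient of $\mathcal{E}$ of slope $\theta<\mu(\mathcal{E})$. The surjection $\mathcal{E}\twoheadrightarrow\mathcal{Q}$ gives a closed subvariety $\mathbb{P}_C(\mathcal{Q})\subset Y$ on which $\Lambda_{\mathcal{E}}$ restricts to $\xi_{\mathcal{Q}}-\mu(\mathcal{E})f=\Lambda_{\mathcal{Q}}+(\theta-\mu(\mathcal{E}))f$. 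By the easy half $\Lambda_{\mathcal{Q}}$ is nef, and by Theorem \ref{section} in characteristic $0$, respectively its positive-characteristic counterpart established above, $\Lambda_{\mathcal{Q}}$ is not big; thus $\Lambda_{\mathcal{Q}}$ lies on the boundary of the nef cone of $\mathbb{P}_C(\mathcal{Q})$. The crux is to convert this boundary position into a sequence of multisections $\Gamma$ with $\Lambda_{\mathcal{Q}}\cdot\Gamma/(f\cdot\Gamma)\to 0$, equivalently into quotient line bundles with $\deg L/e\to\theta$. Granting this, the strictly negative term $(\theta-\mu(\mathcal{E}))\,(f\cdot\Gamma)$ dominates for such $\Gamma$, yielding $\Lambda_{\mathcal{E}}\cdot\Gamma<0$ and hence non-nefness; making the bookkeeping of this boundary argument rigorous is the delicate point.

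Finally I would isolate the role of the characteristic: the whole argument is uniform except for the propagation of semistability through finite covers and the choice of the not-big input. In characteristic $0$ ordinary semistability suffices because it survives all finite pullbacks, giving $(1)$; in characteristic $p$ the Frobenius covers are themselves admissible test morphisms, which both forces strong semistability in the nef$\,\Rightarrow\,$semistable direction and is exactly what strong semistability supplies in the converse, giving $(2)$.
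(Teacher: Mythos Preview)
The paper does not prove this theorem; it is quoted from \cite{miyaoka2} as background and no argument is supplied. So there is no ``paper's proof'' to compare against, and I will simply assess your proposal on its own.

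Your translation of nefness of $\Lambda_{\mathcal{E}}$ into the slope inequality $\deg L\ge e\,\mu(\mathcal{E})$ for all quotient line bundles over all finite covers is correct and is the standard starting point; the easy direction (semistable/strongly semistable $\Rightarrow$ nef) is fine, and your remarks on why characteristic enters through the behaviour of semistability under inseparable covers are accurate.

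The converse is also essentially right, but the step you flag as ``delicate'' is in fact immediate and you are making it harder than necessary. You do not need to produce a sequence of multisections with $\Lambda_{\mathcal{Q}}\cdot\Gamma/(f\cdot\Gamma)\to 0$; that is a statement about effective curves in the null locus and is genuinely subtle in general. Instead, simply compute the top self-intersection of the restriction. With $s=\operatorname{rank}\mathcal{Q}$ and $c=\mu(\mathcal{E})-\mu(\mathcal{Q})>0$, one has on $\mathbb{P}_C(\mathcal{Q})$
\[
\bigl(\Lambda_{\mathcal{E}}|_{\mathbb{P}(\mathcal{Q})}\bigr)^{s}
=(\xi_{\mathcal{Q}}-\mu(\mathcal{E})f)^{s}
=\xi_{\mathcal{Q}}^{\,s}-s\,\mu(\mathcal{E})\,\xi_{\mathcal{Q}}^{\,s-1}\!\cdot f
=\deg\mathcal{Q}-s\,\mu(\mathcal{E})
=s\bigl(\mu(\mathcal{Q})-\mu(\mathcal{E})\bigr)<0,
\]
using $\xi_{\mathcal{Q}}^{\,s-1}\!\cdot f=1$ and $f^{2}=0$ on a curve base. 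Since a nef divisor on a projective variety of dimension $s$ has nonnegative $s$-th self-intersection, $\Lambda_{\mathcal{E}}|_{\mathbb{P}(\mathcal{Q})}$ is not nef, hence $\Lambda_{\mathcal{E}}$ is not nef. Note this computation does not even require $\mathcal{Q}$ to be (strongly) semistable, so you may take any destabilising quotient, and in characteristic $p$ a single Frobenius pullback producing such a quotient suffices; there is no need for the strong Harder--Narasimhan filtration or for the ``not big'' input from Theorem~\ref{section}.
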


\begin{theorem}[\cite{nakayama2}]\label{nakayama}
Let $\mathcal{E}$ be a rank r vector bundle on smooth complex projective variety X of dimension d. Then the following conditions are equivalent:
\begin{enumerate}
\item $\Lambda_{\mathcal{E}}$ is nef;
\item $\mathcal{E}$ is $\mu$-semistable and 
\[(c_2(\mathcal{E})-\displaystyle \frac{2r}{r-1} c_1^2(\mathcal{E})).A^{d-2}=0\]
for an ample divisor A;
\item There is a filtration of vector subbundles
\[0=\mathcal{E}_0\subset \mathcal{E}_1\subset \cdots \subset \mathcal{E}_l=\mathcal{E}\]
 such that $\mathcal{E}_i/ \mathcal{E}_{i-1}$ are projectively flat and the averaged first Chern classes $\mu(\mathcal{E}_i/ \mathcal{E}_{i-1})$ are numerically equivalent to $\mu(\mathcal{E}):=c_1(\mathcal{E})/rank\mathcal{E}$ for any i. 
\end{enumerate}
\end{theorem}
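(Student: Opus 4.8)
The plan is to establish the three equivalences as a cycle $(1)\Rightarrow(2)\Rightarrow(3)\Rightarrow(1)$, with the reduction of nefness of $\Lambda_{\mathcal{E}}$ to a statement about restrictions to curves as the organizing principle. First I would record a computable reformulation of condition $(2)$. On $Y=\mathbb{P}_X(\mathcal{E})$ one has $r\Lambda_{\mathcal{E}}=r\xi_{\mathcal{E}}-\pi^{*}c_1(\mathcal{E})$, so pushing forward powers of $\Lambda_{\mathcal{E}}$ through the Grothendieck relations ($\pi_{*}\xi_{\mathcal{E}}^{\,r-1}=1$, $\pi_{*}\xi_{\mathcal{E}}^{\,r}=c_1$, $\pi_{*}\xi_{\mathcal{E}}^{\,r+1}=c_1^2-c_2$) yields $\pi_{*}(\Lambda_{\mathcal{E}}^{\,r})=0$ and
\[\pi_{*}(\Lambda_{\mathcal{E}}^{\,r+1})=-\tfrac{1}{2r}\bigl(2r\,c_2(\mathcal{E})-(r-1)c_1(\mathcal{E})^2\bigr).\]
Hence the Chern class condition in $(2)$ is, up to a positive scalar, the vanishing of the Bogomolov discriminant $\Delta(\mathcal{E})=2r\,c_2-(r-1)c_1^2$ against $A^{d-2}$, and this is the form in which I will use it.

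For $(1)\Rightarrow(2)$ I would treat the two requirements separately. The discriminant condition is immediate from Theorem \ref{section}: nefness of $\Lambda_{\mathcal{E}}$ forces $\nu(\Lambda_{\mathcal{E}})=r-1$ on the $(r-1+d)$-dimensional variety $Y$, so the class $\Lambda_{\mathcal{E}}^{\,r}$ is numerically trivial, whence $\Lambda_{\mathcal{E}}^{\,r+1}\cdot(\pi^{*}A)^{d-2}=0$; by the projection formula and the identity above this reads $\Delta(\mathcal{E})\cdot A^{d-2}=0$. For $\mu$-semistability I would restrict to a general complete intersection curve $C=A_1\cap\cdots\cap A_{d-1}$: the restriction of $\Lambda_{\mathcal{E}}$ to $\pi^{-1}(C)=\mathbb{P}_C(\mathcal{E}|_C)$ is $\Lambda_{\mathcal{E}|_C}$ and is still nef, so Theorem \ref{miyaoka} gives that $\mathcal{E}|_C$ is $\mu$-semistable, and the Mehta--Ramanathan restriction theorem upgrades this to $\mu$-semistability of $\mathcal{E}$.

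For $(2)\Rightarrow(3)$, the structural heart, I would pass to the Jordan--H\"older filtration of the $\mu$-semistable $\mathcal{E}$ into $\mu$-stable subquotients, all of slope $\mu(\mathcal{E})$. Bogomolov's inequality gives $\Delta(\mathcal{F}_i)\cdot A^{d-2}\geq0$ for each stable factor, and since the discriminant is additive over an equal-slope filtration (the cross terms involve slope differences, which vanish), the hypothesis forces $\Delta(\mathcal{F}_i)\cdot A^{d-2}=0$ for every $i$. A $\mu$-stable bundle saturating the Bogomolov inequality is projectively flat: by the Donaldson--Uhlenbeck--Yau theorem it carries a Hermitian--Einstein metric, and by the L\"ubke--Kobayashi (Simpson) characterization the vanishing discriminant makes the Hermitian--Einstein connection projectively flat. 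Assembling the stable factors produces the desired filtration by subbundles with projectively flat quotients of the common averaged first Chern class $\mu(\mathcal{E})$, where a restriction argument to general surfaces controls local freeness and saturation of the members (semistable reflexive sheaves with vanishing discriminant being locally free on surfaces). I expect this implication to be the main obstacle, both because it rests on the analytic existence theorem for Hermitian--Einstein metrics and because ensuring the filtration is by subbundles in dimension $d>2$ requires the restriction machinery rather than a purely algebraic argument.

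Finally, $(3)\Rightarrow(1)$ I would carry out through a curve criterion. Since $r\Lambda_{\mathcal{E}}=r\xi_{\mathcal{E}}-\pi^{*}c_1(\mathcal{E})$ is invariant under twisting $\mathcal{E}$ by a line bundle, $\Lambda_{\mathcal{G}}$ for a projectively flat $\mathcal{G}$ depends only on the flat projective bundle $\mathbb{P}(\mathcal{G})$ and is represented by a fibrewise Fubini--Study form, hence is nef. Now a divisor on $Y$ is nef if and only if it is nonnegative on every irreducible curve, and each such curve either lies in a fibre, where $\Lambda_{\mathcal{E}}$ restricts to $\xi_{\mathcal{E}}=\mathcal{O}(1)$, or dominates a curve $C\subset X$, where nefness of $\bar g^{*}\Lambda_{\mathcal{E}}=\Lambda_{g^{*}\mathcal{E}}$ is, by Theorem \ref{miyaoka}, equivalent to $\mu$-semistability of $g^{*}\mathcal{E}$ for the normalization $g:\tilde C\to C\hookrightarrow X$. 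As flat structures pull back, each $g^{*}\mathcal{G}_i$ is projectively flat, hence $\mu$-semistable of slope $\mu(g^{*}\mathcal{E})$, and an iterated extension of equal-slope semistable bundles on a curve is again $\mu$-semistable; therefore $g^{*}\mathcal{E}$ is $\mu$-semistable for every curve and $\Lambda_{\mathcal{E}}$ is nef, closing the cycle.
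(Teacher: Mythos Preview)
The paper does not supply a proof of this theorem: it is quoted verbatim from Nakayama's monograph \cite{nakayama2} and used as a black box, so there is no in-paper argument to compare against. Your cycle $(1)\Rightarrow(2)\Rightarrow(3)\Rightarrow(1)$ is the standard route and matches in outline what Nakayama does in the cited reference (reduction to curves via Mehta--Ramanathan for semistability, the Bogomolov discriminant computation via push-forward of $\Lambda_{\mathcal{E}}^{\,r+1}$, Jordan--H\"older plus Donaldson--Uhlenbeck--Yau and the Kobayashi--L\"ubke equality case for $(2)\Rightarrow(3)$, and the curve criterion for $(3)\Rightarrow(1)$).

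Two small remarks. First, the Chern-class identity printed in condition~(2) of the paper is almost certainly a typo: your push-forward computation correctly gives $\pi_{*}\Lambda_{\mathcal{E}}^{\,r+1}=-\tfrac{1}{2r}\Delta(\mathcal{E})$ with $\Delta=2r\,c_2-(r-1)c_1^2$, so the coefficient should be $(r-1)/(2r)$ rather than $2r/(r-1)$; you were right to silently normalize to the Bogomolov discriminant. Second, in $(1)\Rightarrow(2)$ you pass from $\nu(\Lambda_{\mathcal{E}})=r-1$ to ``$\Lambda_{\mathcal{E}}^{\,r}$ is numerically trivial''; strictly speaking Theorem~\ref{section} gives $\Lambda_{\mathcal{E}}^{\,r}\cdot H^{d-1}=0$ for an ample $H$ on $Y$, and one then uses the standard nef-divisor lemma (if $D$ is nef and $D^{k}\cdot H^{n-k}=0$ then $D^{k}\cdot D_1\cdots D_{n-k}=0$ for all nef $D_i$) to conclude $\Lambda_{\mathcal{E}}^{\,r+1}\cdot(\pi^{*}A)^{d-2}=0$. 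With that caveat your argument is sound.
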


Here, a vector bundle $\mathcal{E}$ is called projectively flat if it admits a projectively flat Hermitian metric. Nakayama shows that projectively flat vector bundles are induced from some representations of the fundamental group of the base space.   

\begin{proposition}[\cite{nakayama2}]\label{representation}
Let $\mathcal{E}$ be a vector bundle of rank r on a smooth complex projective variety X.
Then $\mathcal{E}$ is projectively flat if and only if  the associated $\mathbb{P}^{r-1}$-bundle $\pi : \mathbb{P}_X(\mathcal{E}) \rightarrow X$ is induced from a
projective unitary representation $\pi_1(X)\rightarrow PU(r)$. 
\end{proposition}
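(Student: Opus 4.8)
The plan is to identify projective flatness as the vanishing of the trace-free part of the curvature and then to pass to the projectivization, where the resulting flat $PU(r)$-connection is encoded by its holonomy representation via the Riemann--Hilbert correspondence. First I would fix a Hermitian metric $h$ on $\mathcal{E}$ and let $\nabla$ be its Chern connection, with curvature $F_h$, an $\mathrm{End}(\mathcal{E})$-valued $(1,1)$-form. By definition $h$ is projectively flat exactly when $F_h=\frac{1}{r}(\mathrm{tr}\,F_h)\cdot\mathrm{Id}_{\mathcal{E}}$, i.e. when the trace-free part of $F_h$ vanishes. The metric $h$ reduces the structure group of $\mathbb{P}_X(\mathcal{E})$ from $PGL(r,\mathbb{C})$ to $PU(r)$, and $\nabla$ descends to a connection on this $PU(r)$-bundle. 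The linear-algebra point I would record as a lemma is that, under the splitting $\mathfrak{u}(r)=\mathfrak{su}(r)\oplus i\mathbb{R}\cdot\mathrm{Id}$ and the identification $\mathfrak{pu}(r)\cong\mathfrak{su}(r)$, the curvature of the descended connection is precisely the trace-free part of $F_h$.

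For the forward implication, suppose $h$ is projectively flat. Then the trace-free part of $F_h$ is zero, so the descended $PU(r)$-connection on $\mathbb{P}_X(\mathcal{E})$ is flat. By the holonomy (Riemann--Hilbert) correspondence a flat $PU(r)$-bundle is recovered from its monodromy, giving a representation $\rho:\pi_1(X)\to PU(r)$ with $\mathbb{P}_X(\mathcal{E})\cong\widetilde{X}\times_\rho\mathbb{P}^{r-1}$, where $\widetilde{X}\to X$ is the universal covering. This is exactly the statement that $\mathbb{P}_X(\mathcal{E})$ is induced from the projective unitary representation $\rho$.

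For the converse, suppose $\mathbb{P}_X(\mathcal{E})\cong\widetilde{X}\times_\rho\mathbb{P}^{r-1}$ for some $\rho:\pi_1(X)\to PU(r)$. The flat bundle on the right has locally constant, hence holomorphic, transition functions in $PU(r)$, and the $PU(r)$-invariant Fubini--Study metric glues to a Hermitian structure on the fibres. Lifting the constant $PU(r)$-cocycle to a constant $U(r)$-cocycle modulo a scalar cocycle (which records a line bundle $L$), I would assemble a Hermitian metric on $\mathcal{E}\otimes L$ whose Chern connection has locally constant, hence flat, projectivization; its curvature is therefore a scalar $(1,1)$-form times $\mathrm{Id}$, so $\mathcal{E}\otimes L$ is projectively flat. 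Since twisting by a line bundle leaves $\mathbb{P}_X(\mathcal{E})$ unchanged and only shifts the curvature by a scalar multiple of the identity, $\mathcal{E}$ itself is then projectively flat.

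The hard part will be the converse, namely producing from the flat $PU(r)$-structure on the projective bundle a genuine Hermitian metric on the vector bundle whose \emph{Chern} connection --- not merely some metric connection --- is projectively flat; this requires reconciling the locally constant flat trivializations with the given holomorphic structure. The obstruction to lifting a $PU(r)$-bundle to $U(r)$ lives in the center $U(1)=\ker(U(r)\to PU(r))$, but here the lift is supplied for free by the vector bundle $\mathcal{E}$, and the residual ambiguity is exactly the line-bundle twist by $L$, which is harmless because it does not affect projectivizations. Keeping this twist explicit, and verifying that the assembled metric is compatible with the holomorphic structure so that its Chern connection realizes the flat projective connection, is the step demanding the most care.
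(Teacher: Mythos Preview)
The paper does not supply its own proof of this proposition: it is quoted from \cite{nakayama2} and used as a black box, so there is nothing in the paper to compare your argument against.

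That said, your outline is essentially the standard differential--geometric proof (as in Kobayashi's book). The forward direction is correct as written. For the converse, your plan to lift the locally constant $PU(r)$-cocycle to $U(r)$ is workable but more delicate than necessary: the obstruction to a global lift is a class in $H^2(\pi_1(X),\mathbb{Z})$, and while the existence of $\mathcal{E}$ guarantees this obstruction vanishes after a line-bundle twist, tracking this through and then matching the resulting metric with the given holomorphic structure on $\mathcal{E}$ is exactly the ``hard part'' you flag. A cleaner route is to observe that the flat $PU(r)$-structure on $\mathbb{P}_X(\mathcal{E})$ is the same datum as a Hermitian metric on $\mathcal{E}$ defined up to a positive scalar function; fix any representative $h$ and form its Chern connection $\nabla$. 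Both the induced connection $\overline{\nabla}$ on the $PU(r)$-bundle and the given flat connection are compatible with the holomorphic structure and with the $PU(r)$-reduction, and such a connection is unique, so they coincide. Hence $\overline{\nabla}$ is flat, i.e.\ the trace-free curvature of $\nabla$ vanishes, and $h$ is projectively flat. This avoids cocycle lifting entirely and makes the compatibility with the holomorphic structure automatic rather than something to be checked.
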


Using this fact we immediately get the following result.

\begin{theorem}\label{rc1}
Let X be a smooth projective variety and $\mathcal{E}$ a rank r vector bundle on X.
Assume that X is simply connected and the normalized tautological line bundle $\Lambda=\Lambda_{\mathcal{E}}$ of $\mathcal{E}$ is nef.
Then $\mathcal{E}$ is trivial up to twist by a line bundle.
\end{theorem}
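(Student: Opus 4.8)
The plan is to combine the structural Theorem~\ref{nakayama} with the representation-theoretic description of projective flatness in Proposition~\ref{representation}, exploiting that $\pi_1(X)=1$ forces every projective unitary representation to be trivial. First I would apply the equivalence $(1)\Leftrightarrow(3)$ of Theorem~\ref{nakayama} to the nef divisor $\Lambda$, producing a filtration
\[0=\mathcal{E}_0\subset\mathcal{E}_1\subset\cdots\subset\mathcal{E}_l=\mathcal{E}\]
by vector subbundles whose successive quotients $\mathcal{Q}_i:=\mathcal{E}_i/\mathcal{E}_{i-1}$ are projectively flat and satisfy $\mu(\mathcal{Q}_i)\equiv\mu(\mathcal{E})$ numerically for every $i$.

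Next, by Proposition~\ref{representation} each $\mathcal{Q}_i$ arises from a projective unitary representation $\pi_1(X)\to PU(n_i)$, where $n_i=\operatorname{rank}\mathcal{Q}_i$. Since $X$ is simply connected this representation is trivial, so $\mathbb{P}_X(\mathcal{Q}_i)\cong X\times\mathbb{P}^{n_i-1}$ is the trivial projective bundle; equivalently $\mathcal{Q}_i\cong L_i^{\oplus n_i}$ for some line bundle $L_i$. Passing to averaged first Chern classes gives $c_1(L_i)\equiv\mu(\mathcal{Q}_i)\equiv\mu(\mathcal{E})$, so all the $L_i$ are numerically equivalent to one another.

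To finish I would upgrade these two numerical statements to honest isomorphisms, and here the full strength of simple connectedness enters. Because $X$ is a simply connected projective manifold we have $b_1(X)=0$, hence $H^1(X,\mathcal{O}_X)=0$ and $\mathrm{Pic}^0(X)=0$; moreover a nontrivial torsion line bundle would produce a connected cyclic \'etale cover of $X$, contradicting $\pi_1(X)=1$, so $\mathrm{Pic}^{\tau}(X)=0$ and numerically equivalent line bundles on $X$ are isomorphic. Thus all the $L_i$ coincide with a single line bundle $L$, and each extension
\[0\to\mathcal{E}_{i-1}\to\mathcal{E}_i\to L^{\oplus n_i}\to 0\]
is classified by an element of $\mathrm{Ext}^1(L^{\oplus n_i},L^{\oplus m_i})\cong H^1(X,\mathcal{O}_X)^{\oplus n_i m_i}=0$, where $m_i=\operatorname{rank}\mathcal{E}_{i-1}$. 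Therefore each extension splits, and an induction up the filtration yields $\mathcal{E}\cong L^{\oplus r}$, that is, $\mathcal{E}\otimes L^{-1}\cong\mathcal{O}_X^{\oplus r}$ is trivial.

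The genuinely delicate point is not the formal bookkeeping but this last upgrade: Theorem~\ref{nakayama} and Proposition~\ref{representation} only deliver the triviality of each projectivized quotient together with a numerical coincidence of slopes, whereas the conclusion demands an actual twist by one fixed line bundle and the splitting of all the extensions. I expect the main obstacle to be verifying that simple connectedness supplies exactly the cohomological vanishing $\mathrm{Pic}^{\tau}(X)=0$ and $H^1(X,\mathcal{O}_X)=0$ needed to pass from the numerical level to the isomorphism level; once that is in hand the conclusion is immediate.
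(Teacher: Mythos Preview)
Your argument is correct and follows the same route as the paper: apply the filtration of Theorem~\ref{nakayama}(3), use Proposition~\ref{representation} together with $\pi_1(X)=1$ to trivialize the projectively flat graded pieces up to twist, and then invoke $h^1(\mathcal{O}_X)=0$ to split the successive extensions inductively. If anything you are more careful than the paper, which asserts ``we easily get $\mathcal{L}\cong\mathcal{M}$'' from the numerical coincidence of slopes without spelling out the $\mathrm{Pic}^{\tau}(X)=0$ step that you make explicit.
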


\begin{proof}
On a simply connected manifold we know that there are only trivial projectively flat vector bundles up to twist by Proposition \ref{representation}.
From Theorem \ref{nakayama} $(3)$, we know that $\mathcal{E}$ is constructed by an extension of such vector bundles.
We can denote $\mathcal{E}_1=\mathcal{L}^{\oplus r_1}$ for some line bundle $\mathcal{L}$.
For the bundle $\mathcal{E}_2$ we have an exact sequence 
\[0 \rightarrow \mathcal{E}_1=\mathcal{L}^{\oplus r_1} \rightarrow \mathcal{E}_2 \rightarrow \mathcal{E}_2/ \mathcal{E}_1=\mathcal{M}^{\oplus r_2} \rightarrow 0.\]
By virtue of Theorem \ref{nakayama} $(3)$ we know that the averaged first Chern class $\mu(\mathcal{E}_2)$ is numerical equivalence to $\mu(\mathcal{E}_2/ \mathcal{E}_1)$.
From this we easily get $\mathcal{L}$ is isomorphic to $\mathcal{M}$.  
Since $h^1(\mathcal{O}_X)=0$ the extension above split and we have $\mathcal{E}_2\cong \mathcal{L}^{\oplus r_1+r_2}$.   
Using this argument repeatedly we can show that $\mathcal{E}$ is trivial up to twist by a line bundle.
\end{proof}

\begin{corollary}\label{rc}
Let X is a rationally connected variety and $\mathcal{E}$ a vector bundle on X.  
If the normalized tautological line bundle $\Lambda=\Lambda_{\mathcal{E}}$ of $\mathcal{E}$ is nef then $\mathcal{E}$ is trivial up to twist by a line bundle.
\end{corollary}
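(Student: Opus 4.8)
The plan is to reduce the corollary directly to Theorem \ref{rc1} by invoking the simple connectivity of rationally connected manifolds. The essential input is the theorem of Campana and Koll\'ar--Miyaoka--Mori that a smooth projective rationally connected variety over $\mathbb{C}$ has trivial fundamental group. Granting this, $X$ is simply connected, and since $X$ is smooth projective and $\Lambda_{\mathcal{E}}$ is nef by hypothesis, every assumption of Theorem \ref{rc1} is met; the conclusion that $\mathcal{E}$ is trivial up to twist by a line bundle then follows at once.

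First I would confirm the standing hypotheses, namely that $X$ is smooth and projective, so that Theorem \ref{nakayama} and Proposition \ref{representation}, which underlie Theorem \ref{rc1}, apply. Then I would recall why rational connectedness forces $\pi_1(X)=1$: a rationally connected variety carries a very free rational curve, and through two general points one may pass such a curve; since $\mathbb{P}^1$ is simply connected these curves lift to any connected finite \'etale cover, forcing every such cover to be trivial, and over $\mathbb{C}$ one upgrades this to vanishing of the full topological fundamental group. This is precisely the hypothesis ``$X$ is simply connected'' appearing in Theorem \ref{rc1}.

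An equivalent route is to unwind the argument of Theorem \ref{rc1} intrinsically rather than citing it as a black box, since the only place $\pi_1(X)=1$ is used is in Proposition \ref{representation}, which makes a projectively flat bundle on a simply connected base trivial up to twist. The extension-splitting step uses $h^1(X,\mathcal{O}_X)=0$, and this too holds for rationally connected $X$ because $H^i(X,\mathcal{O}_X)=0$ for all $i>0$. Thus the filtration of Theorem \ref{nakayama}(3), together with the numerical equality of the averaged first Chern classes $\mu(\mathcal{E}_i/\mathcal{E}_{i-1})$ across the graded pieces, reproduces the conclusion by the same inductive splitting as in the simply connected case.

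I do not expect a genuine obstacle: the content of the corollary is concentrated entirely in the cited simple-connectivity theorem, and everything downstream is the same line-bundle-twist bookkeeping already carried out in Theorem \ref{rc1}. The single point requiring care is to verify that the projectively flat graded pieces $\mathcal{E}_i/\mathcal{E}_{i-1}$ really are trivial up to twist on $X$, which rests on $\pi_1(X)=1$ through Proposition \ref{representation}; once this is in hand the matching of the classes $\mu(\mathcal{E}_i/\mathcal{E}_{i-1})$ with $\mu(\mathcal{E})$ identifies all the twisting line bundles and completes the proof.
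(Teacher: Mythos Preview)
Your proposal is correct and matches the paper's approach: the paper states Corollary~\ref{rc} immediately after Theorem~\ref{rc1} with no separate proof, so the intended argument is exactly the reduction you give, namely that a smooth projective rationally connected variety over $\mathbb{C}$ is simply connected and hence Theorem~\ref{rc1} applies directly. Your additional remark that $h^1(X,\mathcal{O}_X)=0$ also follows from rational connectedness is a helpful consistency check but not needed once simple connectivity is invoked.
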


In positive characteristic case, we can prove the following results.
This is proved by A. Langer if determinant of $\mathcal{E}$ is trivial.

\begin{theorem}[cf. \cite{langer}, Proposition 5.1]\label{positive}
Let $\mathcal{E}$ be a rank r vector bundle on smooth projective variety X of dimension d in positive characteristic $p>0$. Then the following conditions are equivalent:
\begin{enumerate}
\item $\Lambda_{\mathcal{E}}$ is nef;
\item $\mathcal{E}$ is strongly $A$-semistable and 
\[(c_2(\mathcal{E})-\displaystyle \frac{2r}{r-1} c_1^2(\mathcal{E})).A^{d-2}=0\]
for an ample divisor A. 
\end{enumerate}
\end{theorem}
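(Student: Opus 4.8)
The plan is to reduce the statement to A.~Langer's Proposition 5.1 (the case of trivial determinant) by exploiting the fact that every datum in the theorem is invariant under twisting, and to treat the two implications separately, using the curve case of Theorem~\ref{miyaoka}(2) as the engine. First I would record the twist-invariance: for any line bundle $\mathcal{L}$ one has $r\Lambda_{\mathcal{E}\otimes\mathcal{L}}=r\xi_{\mathcal{E}\otimes\mathcal{L}}-\pi^{*}\det(\mathcal{E}\otimes\mathcal{L})=r\xi_{\mathcal{E}}-\pi^{*}\det\mathcal{E}=r\Lambda_{\mathcal{E}}$, so $\Lambda_{\mathcal{E}}$ depends on $\mathcal{E}$ only up to twist; strong $A$-semistability is visibly twist-invariant, and the Bogomolov discriminant $\Delta(\mathcal{E}):=2r\,c_{2}(\mathcal{E})-(r-1)c_{1}^{2}(\mathcal{E})$ (whose vanishing against $A^{d-2}$ is the numerical condition in (2)) is unchanged by twisting. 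Both implications will be organized around the observation that $\Lambda_{\mathcal{E}}$ is nef if and only if $\Lambda_{f^{*}\mathcal{E}}=\bar f^{*}\Lambda_{\mathcal{E}}$ is nef for every morphism $f\colon C\to X$ from a smooth projective curve, since a curve in $\mathbb{P}_{X}(\mathcal{E})$ either lies in a fibre (where $\Lambda$ restricts to $\xi$, which is ample) or dominates a curve $C\subset X$ and then lifts into $\mathbb{P}_{C}(f^{*}\mathcal{E})$.

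For (1)$\Rightarrow$(2) I would argue directly. If $\Lambda_{\mathcal{E}}$ is nef then $\Lambda_{f^{*}\mathcal{E}}$ is nef for every curve, so by Theorem~\ref{miyaoka}(2) the bundle $f^{*}\mathcal{E}$ is strongly $\mu$-semistable. Fixing $n$ and applying this to a general complete intersection curve of high degree, together with the compatibility of the absolute Frobenius with restriction, shows that the restriction of $(F_{X})^{(n)}(\mathcal{E})$ to such a curve is semistable; the Mehta--Ramanathan restriction theorem then gives semistability of $(F_{X})^{(n)}(\mathcal{E})$ for every $n$, i.e. strong $A$-semistability of $\mathcal{E}$. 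For the numerical condition I would restrict to a general surface $S=H_{1}\cap\cdots\cap H_{d-2}$ with $H_{i}\in|A|$, reducing to $\Delta(\mathcal{E})\cdot A^{d-2}=\Delta(\mathcal{E}|_{S})$, and compute on $\mathbb{P}_{S}(\mathcal{E}|_{S})$ that
\[
\Lambda_{\mathcal{E}|_{S}}^{\,r+1}=-\tfrac{1}{2r}\,\Delta(\mathcal{E}|_{S})
\]
using the Segre relations $\pi_{*}\xi^{r-1}=1$, $\pi_{*}\xi^{r}=c_{1}$, $\pi_{*}\xi^{r+1}=c_{1}^{2}-c_{2}$. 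Since $\Lambda_{\mathcal{E}|_{S}}$ is nef its top self-intersection is nonnegative, forcing $\Delta(\mathcal{E})\cdot A^{d-2}\le 0$, while the Bogomolov-type inequality for strongly semistable bundles in characteristic $p$ gives $\Delta(\mathcal{E})\cdot A^{d-2}\ge 0$; hence equality, which is condition (2).

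For (2)$\Rightarrow$(1) I would pass to a finite surjective morphism $h\colon X'\to X$ from a normal (smooth after resolution) variety such that $h^{*}\det\mathcal{E}$ is divisible by $r$ in $\mathrm{Pic}(X')$, e.g. a suitable cyclic covering, so that $h^{*}\mathcal{E}\otimes M^{-1}$ has trivial determinant for some $M$ with $rM=h^{*}\det\mathcal{E}$. The numerical condition survives because $\Delta(h^{*}\mathcal{E})\cdot(h^{*}A)^{d-2}=\deg(h)\,\Delta(\mathcal{E})\cdot A^{d-2}=0$, and strong semistability is preserved under pullback by $h$; thus $h^{*}\mathcal{E}\otimes M^{-1}$ satisfies the hypotheses of Langer's Proposition~5.1 on $X'$, whence $\Lambda_{h^{*}\mathcal{E}}=\bar h^{*}\Lambda_{\mathcal{E}}$ is nef. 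Finally I would descend: nefness of a divisor is preserved and reflected along the finite surjective morphism $\bar h\colon\mathbb{P}_{X'}(h^{*}\mathcal{E})\to\mathbb{P}_{X}(\mathcal{E})$, since every curve downstairs is the image of a curve upstairs, so $\Lambda_{\mathcal{E}}$ is nef.

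The main obstacle I expect is the determinant reduction in (2)$\Rightarrow$(1): twisting alone cannot trivialize $\det\mathcal{E}$ over an integral line bundle, so one is forced onto a ramified cover $h$, and the delicate point is that strong $A$-semistability must be controlled under $h^{*}$ (preserved under pullback and, for the converse implication, reflected by descent). Verifying this compatibility of strong semistability with the chosen finite cover—rather than the descent of nefness, which is standard—is where the real work lies; keeping $h$ separable and flat and invoking the restriction/pullback theory for strongly semistable sheaves is the route I would pursue to close this gap.
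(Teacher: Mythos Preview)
Your $(2)\Rightarrow(1)$ is exactly the paper's argument: pass to a finite cover on which $\det\mathcal{E}$ becomes $r$-divisible, twist to trivial determinant, invoke Langer's Proposition~5.1, and descend nefness along the surjection $\bar h$. The paper makes the cover precise by citing the Bloch--Gieseker lemma, which produces a \emph{smooth} $X'$ and, importantly, factors $f$ as a separable map composed with Frobenius; this is what disposes of the obstacle you flag at the end, since strong semistability is preserved by separable pullback (the paper's second lemma) and tautologically by Frobenius pullback. Your ``cyclic covering, smooth after resolution'' is the weak point of your write-up---resolving destroys finiteness---so you should replace it with Bloch--Gieseker.

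Your $(1)\Rightarrow(2)$ is a genuinely different route. The paper reuses the same Bloch--Gieseker reduction in this direction too: on the cover, $\mathcal{E}'=f^{*}\mathcal{E}(-M)$ has $c_{1}=0$ and is nef (since $\xi_{\mathcal{E}'}=\bar f^{*}\Lambda_{\mathcal{E}}$), hence every Frobenius pullback is nef, hence strongly semistable by the easy slope argument, and then Langer's own argument forces all Chern classes of $\mathcal{E}'$ to vanish numerically; one then pushes the discriminant identity back down. You instead stay on $X$: curve restrictions plus Theorem~\ref{miyaoka}(2) and Mehta--Ramanathan give strong semistability, and the Segre computation $\Lambda^{r+1}=-\tfrac{1}{2r}\Delta$ on a general surface section, sandwiched between nefness ($\ge 0$) and Langer's Bogomolov inequality for strongly semistable sheaves ($\Delta\cdot A^{d-2}\ge 0$), pins the discriminant to zero. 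Both arguments are correct; the paper's is more uniform (one reduction serves both implications), while yours makes the geometry of $\Lambda$ explicit at the cost of importing Mehta--Ramanathan and the positive-characteristic Bogomolov inequality as separate inputs.
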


To show this we use following two lemmata.

\begin{lemma}[\cite{bloch}]
Let X be a smooth projective variety and $\mathcal{L}$ a line bundle on X.
Fix a positive integer $r \in \mathbb{N}$, then there exist a finite surjective morphism $f : X' \rightarrow X$
from smooth variety and line bundle $\mathcal{M}$ on X' such that $f^*(\mathcal{L})\cong r\mathcal{M}$. 
\end{lemma}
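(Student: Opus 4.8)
The plan is to exhibit $\mathcal{M}$ as an $r$-th root of $f^{*}\mathcal{L}$, obtained from the classical power covering of Bloch--Gieseker \cite{bloch}, after reducing to the case of a very ample line bundle. The reduction is formal: fix a very ample $\mathcal{A}$ on $X$; then for $k\gg0$ both $\mathcal{L}+k\mathcal{A}$ and $k\mathcal{A}$ are very ample and $\mathcal{L}=(\mathcal{L}+k\mathcal{A})-k\mathcal{A}$. Assuming the very ample case, I would first take $f_{1}:X_{1}\to X$ with $f_{1}^{*}(\mathcal{L}+k\mathcal{A})=r\mathcal{N}_{1}$, then apply the same case to $f_{1}^{*}(k\mathcal{A})$ to get $f_{2}:X_{2}\to X_{1}$ with $(f_{1}\circ f_{2})^{*}(k\mathcal{A})=r\mathcal{N}_{2}$. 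With $f=f_{1}\circ f_{2}$ I then have $f^{*}\mathcal{L}=r f_{2}^{*}\mathcal{N}_{1}-r\mathcal{N}_{2}=r\left(f_{2}^{*}\mathcal{N}_{1}-\mathcal{N}_{2}\right)$, since the $r$-th power $f_{1}^{*}(\mathcal{L}+k\mathcal{A})$ stays an $r$-th power after the further pullback by $f_{2}$. Thus everything comes down to a single very ample bundle.

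For a very ample $\mathcal{L}$ I would embed $X\hookrightarrow\mathbb{P}^{N}$ by $|\mathcal{L}|$ so that $\mathcal{L}=\mathcal{O}_{X}(1)$, and use the finite flat $r$-th power map $\phi_{r}:\mathbb{P}^{N}\to\mathbb{P}^{N}$, $[x_{0}:\cdots:x_{N}]\mapsto[x_{0}^{r}:\cdots:x_{N}^{r}]$, which satisfies $\phi_{r}^{*}\mathcal{O}_{\mathbb{P}^{N}}(1)=\mathcal{O}_{\mathbb{P}^{N}}(r)$. Setting $Y=\phi_{r}^{-1}(X)$ and $f=\phi_{r}|_{Y}:Y\to X$ (a base change of $\phi_{r}$, hence finite flat and surjective), the restriction $\mathcal{O}_{Y}(1)$ of the hyperplane bundle gives $f^{*}\mathcal{L}=\mathcal{O}_{Y}(r)=r\,\mathcal{O}_{Y}(1)$, so $\mathcal{M}=\mathcal{O}_{Y}(1)$ is the desired root --- \emph{provided} $Y$ is smooth and irreducible.

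The hard part is precisely this smoothness. The power map $\phi_{r}$ is \'etale only away from the coordinate hyperplanes, so $\phi_{r}^{-1}(X)$ may be singular where $X$ meets that ramification. Following Bloch--Gieseker, I would move $X$ by a general element of $G=\mathrm{PGL}_{N+1}$: for $g\in G$ put $Y_{g}=\phi_{r}^{-1}(gX)$, and assemble these into the incidence variety $\mathcal{Y}=\Psi^{-1}(X)\subset\mathbb{P}^{N}\times G$, where $\Psi(y,g)=g^{-1}\phi_{r}(y)$. Since $G$ acts transitively on $\mathbb{P}^{N}$ with smooth orbit maps, $\Psi$ is a smooth morphism, whence $\mathcal{Y}$ is smooth; generic smoothness of the projection $\mathcal{Y}\to G$ (valid in characteristic $0$) then forces the general fibre $Y_{g}$ to be smooth, and a monodromy argument lets me take it irreducible. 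Replacing $X$ by the isomorphic $gX$ completes the very ample case. I expect this transversality step --- guaranteeing smoothness of the covering rather than merely its existence --- to be the only genuine difficulty; the reduction above and the bookkeeping with Chern classes are routine.

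In positive characteristic the same power covering handles the part of $r$ prime to $p$ (where $\phi_{r'}$ is separable, so the transversality reasoning above still applies), while the $p$-primary part is essentially free: pulling back along the absolute Frobenius $F_{X}$ turns every line bundle into a $p$-th power, since $F_{X}^{*}\mathcal{L}\cong p\mathcal{L}$ and $F_{X}$ is a finite surjective morphism from the same smooth variety. Writing $r=p^{a}r'$ with $\gcd(r',p)=1$, I would compose the power covering for $r'$ with $a$ iterates of Frobenius to realize $f^{*}\mathcal{L}$ as an $r$-th power over a smooth variety, which is the form in which the lemma is invoked in Theorem \ref{positive}.
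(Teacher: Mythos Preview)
The paper does not supply its own proof of this lemma: it is quoted directly from Bloch--Gieseker and used as a black box in the proof of the positive-characteristic analogue of Nakayama's theorem. Your proposal reconstructs exactly the classical Bloch--Gieseker argument --- reduction to the very ample case by writing $\mathcal{L}$ as a difference of very ample bundles, the $r$-th power map $\phi_r$ on $\mathbb{P}^N$, and the generic-translate trick via $\mathrm{PGL}_{N+1}$ to guarantee smoothness of the cover --- and your handling of positive characteristic (splitting $r=p^{a}r'$ and absorbing the $p$-part by iterated Frobenius, the $r'$-part by the separable power map) matches precisely how the paper later invokes the result, where it notes that $f$ is ``a composition of separable morphism and absolute Frobenius morphism''. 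The argument is correct; the only place one might want a word more is the irreducibility of $Y_g$, but since $f$ is finite flat over the connected smooth $X$, any connected component of the smooth $Y_g$ already surjects onto $X$ and can serve as $X'$, so the monodromy argument is not strictly needed.
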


\begin{lemma}[c.f. \cite{miyaoka2}, Proposition3.2]
Let $f: Y\rightarrow X$ be a finite separable morphism of smooth projective varieties $\mathcal{E}$ a vector bundle on X and $\mathcal{H}$ an ample line bundle on X.
Then $\mathcal{E}$ is $\mathcal{H}$-semistable if and only if $f^*\mathcal{E}$ is $f^*\mathcal{H}$-semistable.
\end{lemma}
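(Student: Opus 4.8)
The plan is to compare the slope functions upstairs and downstairs and then treat the two implications separately, the ``if'' direction being elementary and the ``only if'' direction requiring a descent argument through the Galois closure.

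First I would record the numerical input. Since $X$ and $Y$ are smooth of the same dimension $n$ and $f$ is finite, $f$ is flat by miracle flatness, and $f^*\mathcal{H}$ is again ample, so $f^*\mathcal{H}$-semistability makes sense. Writing $d=\deg f$, the projection formula gives $\int_Y f^*\alpha=d\int_X\alpha$ for any top-degree class $\alpha$, and since $f^*$ is a ring homomorphism on cohomology (or the Chow ring) and commutes with $c_1$, one obtains
\[
\mu_{f^*\mathcal{H}}(f^*\mathcal{F})=\frac{c_1(f^*\mathcal{F})\cdot(f^*\mathcal{H})^{n-1}}{\operatorname{rank}\mathcal{F}}=d\,\mu_{\mathcal{H}}(\mathcal{F})
\]
for every coherent subsheaf $\mathcal{F}\subset\mathcal{E}$, using that flat pullback preserves rank. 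Thus pulling back multiplies every slope by the same positive factor $d$, so a comparison of two slopes is preserved in both directions.

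The ``if'' direction is then immediate: given a destabilizing subsheaf $\mathcal{F}\subset\mathcal{E}$, flatness of $f$ makes $f^*\mathcal{F}\subset f^*\mathcal{E}$ a subsheaf, and by the displayed identity $\mu_{f^*\mathcal{H}}(f^*\mathcal{F})>\mu_{f^*\mathcal{H}}(f^*\mathcal{E})$, contradicting semistability of $f^*\mathcal{E}$. Hence $f^*\mathcal{E}$ semistable forces $\mathcal{E}$ semistable.

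For the ``only if'' direction I would first reduce to the case where $f$ is Galois. Since $f$ is separable it admits a Galois closure $g:\widetilde{Y}\rightarrow X$ factoring as $\widetilde{Y}\xrightarrow{h}Y\xrightarrow{f}X$, with $g$ finite Galois of group $G$. If the statement is known for $g$, then $\mathcal{E}$ semistable gives $g^*\mathcal{E}=h^*(f^*\mathcal{E})$ semistable for $g^*\mathcal{H}=h^*(f^*\mathcal{H})$, and applying the already-proved ``if'' direction to $h$ yields semistability of $f^*\mathcal{E}$; so it suffices to assume $f=g$ is Galois. Suppose then, for contradiction, that $g^*\mathcal{E}$ is not $g^*\mathcal{H}$-semistable, and let $\mathcal{F}_1\subset g^*\mathcal{E}$ be its maximal destabilizing (first Harder--Narasimhan) subsheaf, with $\mu_{g^*\mathcal{H}}(\mathcal{F}_1)>\mu_{g^*\mathcal{H}}(g^*\mathcal{E})$. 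Because $g\circ\sigma=g$ for every $\sigma\in G$, each $\sigma$ induces a canonical automorphism of $g^*\mathcal{E}$; by uniqueness of the maximal destabilizing subsheaf, $\sigma^*\mathcal{F}_1=\mathcal{F}_1$ for all $\sigma$, so $\mathcal{F}_1$ is $G$-invariant. Galois (fppf) descent along the faithfully flat morphism $g$ then produces a subsheaf $\mathcal{G}\subset\mathcal{E}$ with $g^*\mathcal{G}=\mathcal{F}_1$, and the slope formula gives $\mu_{\mathcal{H}}(\mathcal{G})=d^{-1}\mu_{g^*\mathcal{H}}(\mathcal{F}_1)>d^{-1}\mu_{g^*\mathcal{H}}(g^*\mathcal{E})=\mu_{\mathcal{H}}(\mathcal{E})$, contradicting semistability of $\mathcal{E}$.

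The main obstacle is this invariance-plus-descent step: one must know that the maximal destabilizing subsheaf is genuinely unique, so that the $G$-action fixes it and equips it with a canonical $G$-linearization, and that a $G$-linearized subsheaf of $g^*\mathcal{E}$ descends to $X$. Here the separability hypothesis is essential, since it guarantees that the Galois closure exists and that $g$ is generically \'etale, hence faithfully flat with a well-behaved $G$-action, which is exactly what makes the descent valid; in the inseparable case both the slope computation and the descent break down.
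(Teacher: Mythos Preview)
The paper does not supply its own proof of this lemma; it is simply quoted with the citation ``c.f.\ \cite{miyaoka2}, Proposition~3.2'' and then used as input to the proof of Theorem~\ref{positive}. So there is no argument in the paper to compare against.

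Your proposal is the standard proof of this classical fact and is essentially correct: the easy direction via pullback of a destabilizer, and the harder direction via Galois closure, $G$-invariance of the maximal destabilizing subsheaf from its uniqueness, and faithfully flat descent. One technical point worth tightening: the Galois closure $\widetilde{Y}\to X$ is the normalization of $X$ in the Galois closure of $k(Y)/k(X)$, so $\widetilde{Y}$ is a priori only \emph{normal}, not smooth. This does not actually hurt the argument---the Harder--Narasimhan filtration, uniqueness of the maximal destabilizer, and the degree/slope computation all work on a normal projective variety with an ample polarization, and the ``if'' direction you invoke for $h:\widetilde{Y}\to Y$ only needs that a destabilizing subsheaf on $Y$ produces one on $\widetilde{Y}$, which follows since $Y$ is smooth (so $h$ is flat by miracle flatness, $\widetilde{Y}$ being normal hence Cohen--Macaulay in this setting, or one can argue generically). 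But you should say a word about this rather than implicitly assuming $\widetilde{Y}$ is smooth. With that caveat addressed, the proof is complete and is exactly the argument one finds in the cited reference.
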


\begin{proof}[Proof of Theorem \ref{positive}]
At first we prove $(1)$ induce (2). Fix an ample divisor $A$ on X. By the theorem above we have a finite surjective morphism $f : X' \rightarrow X$
from smooth variety and line bundle $\mathcal{M}$ on X' such that $f^*(det(\mathcal{E}))\cong r\mathcal{M}$.
Put $\mathcal{E}':=f^*\mathcal{E}(-M)$, then $c_1(\mathcal{E}')=0$ and $\mathcal{E}'$ is nef since $\xi_{\mathcal{E}'}= \bar{f}^* \Lambda_{\mathcal{E}}$ where 
$\bar{f}: \mathbb{P}(\mathcal{E}') \rightarrow  \mathbb{P}(\mathcal{E})$ is a morphism induced by $f$. Therefore every Frobenius pullback $(F_X)^{(n)}(\mathcal{E}')$ is
also nef and any quotient bundle has positive degree with respect to the ample divisor $f^*(A)$. Hence $(F_X)^{(n)}(\mathcal{E}')$ is $f^*(A)$-semistable.
By the argument as in \cite{langer}, Proposition 5.1, we can show the all Chern classes of $\mathcal{E}'$ are numerical trivial with respect to $f^*(A)$.
In particular $(c_2(\mathcal{E}')-\displaystyle \frac{2r}{r-1} c_1^2(\mathcal{E}')).f^*(A)^{n-2}=0.$
Therefore we have $(c_2(f^*\mathcal{E})-\displaystyle \frac{2r}{r-1} c_1^2(f^*\mathcal{E})).f^*(A)^{n-2}$
$=(deg f)^n(c_2(\mathcal{E})-\displaystyle \frac{2r}{r-1} c_1^2(\mathcal{E})).A^{n-2}=0$. Since $f$ is surjective and $f^*\mathcal{E}$ is strongly $f^*(A)$-semistable, we have $\mathcal{E}$ is strongly $A$-semistable.
Next we prove $(2)$ induce (1).
By the theorem above we have a finite surjective morphism $f : X' \rightarrow X$
from smooth variety and line bundle $\mathcal{M}$ on X' such that $f^*(det(\mathcal{E}))\cong r\mathcal{M}$.
More precisely $f$ is  given by a composition of separable morphism and absolute Frobenius morphism.
Therefore $f^*(\mathcal{E})$ is strongly $f^*(A)$-semistable. Hence $\mathcal{E}':=f^*\mathcal{E}(-M)$ is also strongly $f^*(A)$-semistable and $c_1(\mathcal{E}')=0$.
Since $(c_2(\mathcal{E}')-\displaystyle \frac{2r}{r-1} c_1^2(\mathcal{E}')).f^*(A)^{n-2}=0$, we get $c_2(\mathcal{E}').f^*(A)^{n-2}=0$.
By \cite{langer}, Proposition 5.1, we know that  $\mathcal{E}'$ is nef vector bundle. Therefore $\Lambda_{\mathcal{E}}$ is nef since $\xi_{\mathcal{E}'}= \bar{f}^* \Lambda_{\mathcal{E}}$ where 
$\bar{f}: \mathbb{P}(\mathcal{E}') \rightarrow  \mathbb{P}(\mathcal{E})$ is a morphism induced by $f$.
\end{proof}

As a corollary we have an another proof originally showed by A.Langer in \cite{langer} Corollary  5.3.

\begin{corollary}
Let X be a smooth projective variety H an ample divisor. Let $\mathcal{E}$ be a H-strongly semistable vector bundle with $\Delta(\mathcal{E})=0$. 
Then for any smooth closed subvariety $Y\subseteq X$ the restriction $\mathcal{E}|_Y$ is also $H|_Y$-strongly semistable.
\end{corollary}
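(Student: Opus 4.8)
The plan is to deduce the restriction statement from the nefness characterization of Theorem \ref{positive}, using the elementary fact that nefness of the normalized tautological divisor is automatically inherited under restriction to a subvariety. Since $\mathcal{E}$ is strongly $H$-semistable and $\Delta(\mathcal{E})=0$, the numerical triviality of the discriminant yields in particular the intersection-number condition appearing in Theorem \ref{positive}~(2); hence by the implication (2)$\Rightarrow$(1) the normalized tautological divisor $\Lambda_{\mathcal{E}}$ on $\mathbb{P}_X(\mathcal{E})$ is nef. This is the only place where the hypothesis $\Delta(\mathcal{E})=0$ enters.

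The next step is to record the base-change compatibility of $\Lambda$. Write $i:Y\hookrightarrow X$ for the closed immersion. The projective bundle $\mathbb{P}_Y(\mathcal{E}|_Y)$ is the fiber product $\mathbb{P}_X(\mathcal{E})\times_X Y$, and the induced morphism $j:\mathbb{P}_Y(\mathcal{E}|_Y)\rightarrow\mathbb{P}_X(\mathcal{E})$ is a closed immersion satisfying $\xi_{\mathcal{E}|_Y}=j^*\xi_{\mathcal{E}}$ and $\pi_Y^*\det(\mathcal{E}|_Y)=j^*\pi^*\det(\mathcal{E})$, where $\pi_Y$ denotes the projection of $\mathbb{P}_Y(\mathcal{E}|_Y)$. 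Combining these with the defining relation $r\Lambda=r\xi-\pi^*\det(\mathcal{E})$ gives $\Lambda_{\mathcal{E}|_Y}=j^*\Lambda_{\mathcal{E}}$. Because the pullback of a nef divisor under a morphism of projective varieties is again nef (for any curve $C\subseteq\mathbb{P}_Y(\mathcal{E}|_Y)$ one has $j^*\Lambda_{\mathcal{E}}\cdot C=\Lambda_{\mathcal{E}}\cdot j_*C\geqslant 0$ by the projection formula and nefness of $\Lambda_{\mathcal{E}}$), the divisor $\Lambda_{\mathcal{E}|_Y}$ is nef.

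Finally I would invoke the reverse implication of the appropriate nefness characterization, now on $Y$. If $\dim Y\geqslant 2$, then $H|_Y$ is ample on the smooth projective variety $Y$, and Theorem \ref{positive}~(1)$\Rightarrow$(2) applied to the pair $(\mathcal{E}|_Y,H|_Y)$ shows directly that $\mathcal{E}|_Y$ is strongly $H|_Y$-semistable; note that it is the geometric characterization that produces strong semistability, so no independent verification of a discriminant condition on $Y$ is needed. If $\dim Y=1$ the same conclusion follows from Theorem \ref{miyaoka}~(2), and if $\dim Y=0$ the statement is vacuous. In every case $\mathcal{E}|_Y$ is strongly $H|_Y$-semistable, as claimed.

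The argument is essentially formal once Theorem \ref{positive} is available, so I do not anticipate a genuine obstacle; the single point demanding care is the identification $\Lambda_{\mathcal{E}|_Y}=j^*\Lambda_{\mathcal{E}}$, which relies on the compatibility of the tautological class with base change along $i$ and on $\det$ commuting with the pullback $i^*$. The conceptual content is simply that restricting to $Y$ corresponds to restricting $\Lambda_{\mathcal{E}}$ to the closed subbundle $\mathbb{P}_Y(\mathcal{E}|_Y)$, under which nefness can only be preserved.
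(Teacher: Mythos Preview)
Your proposal is correct and is precisely the argument the paper has in mind: the corollary is stated without proof immediately after Theorem \ref{positive}, with the remark that it gives ``another proof'' of Langer's result, and the intended deduction is exactly the one you wrote---pass from strong semistability with vanishing discriminant to nefness of $\Lambda_{\mathcal{E}}$ via (2)$\Rightarrow$(1), restrict, and pass back via (1)$\Rightarrow$(2) (or Theorem \ref{miyaoka} when $\dim Y=1$). The only point worth tightening is cosmetic: the paper's condition (2) is stated as an intersection number against $A^{d-2}$, so you should note explicitly that $\Delta(\mathcal{E})=0$ as a class trivially forces that intersection number to vanish.
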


If the base manifold is a surface we can prove the same statement as in Corollary \ref{rc}.

\begin{theorem}$($char$\;>0)$\label{positive}
let S be a smooth projective rational surface over an algebraically closed field $k$ of positive characteristic $p>0$ and $\mathcal{E}$ a vector bundle of rank r on S.
Assume that the anticanonical bundle of the projection $\pi : \mathbb{P}_S(\mathcal{E}) \rightarrow S$ is nef, then $\mathcal{E}$ is isomorphic to a trivial bundle up to twist.   
\end{theorem}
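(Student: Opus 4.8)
The plan is to combine the positive-characteristic criterion established above — that nefness of $\Lambda_{\mathcal{E}}$ is equivalent to $\mathcal{E}$ being strongly $A$-semistable with vanishing discriminant — with the abundance of rational curves on a rational surface. First I would reduce to the case $c_1(\mathcal{E})=0$. For every smooth rational curve $C\subset S$ the restriction of $\Lambda_{\mathcal{E}}$ equals $\Lambda_{\mathcal{E}|_C}$, which is nef, so by Theorem \ref{miyaoka} the bundle $\mathcal{E}|_C$ is strongly $\mu$-semistable; on $\mathbb{P}^1$ this forces a balanced splitting $\mathcal{E}|_C\cong\mathcal{O}_{\mathbb{P}^1}(a)^{\oplus r}$, whence $c_1(\mathcal{E})\cdot C=ra\equiv0\pmod r$. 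Since $S$ is rational, $\mathrm{Pic}(S)$ is torsion-free, admits a basis represented by moving smooth rational curves, and carries a unimodular intersection form by Poincaré duality. Hence $c_1(\mathcal{E})\cdot(-)$ is $r$ times an integral functional, so $c_1(\mathcal{E})=r\,c_1(\mathcal{L})$ for some $\mathcal{L}\in\mathrm{Pic}(S)$. Replacing $\mathcal{E}$ by $\mathcal{E}_0:=\mathcal{E}\otimes\mathcal{L}^{-1}$, which changes neither $\mathbb{P}_S(\mathcal{E})$ nor the nef divisor $\Lambda_{\mathcal{E}}$, I may assume $c_1(\mathcal{E}_0)=0$; the criterion then gives $c_2(\mathcal{E}_0)=0$ and shows $\mathcal{E}_0$ is a nef vector bundle with $c_1=c_2=0$, i.e. numerically flat, trivial on every rational curve.

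It then remains to show that such an $\mathcal{E}_0$ is trivial, which I would do by induction on $r$ by producing a nowhere-vanishing section. The key input is $h^2(S,\mathcal{E}_0)=0$: by Serre duality $h^2(\mathcal{E}_0)=h^0(K_S\otimes\mathcal{E}_0^{\vee})$, and for a free rational curve $C$ (these cover $S$, as $S$ is separably rationally connected) one has $-K_S\cdot C\ge2$, so $(K_S\otimes\mathcal{E}_0^{\vee})|_C\cong\mathcal{O}_{\mathbb{P}^1}(K_S\cdot C)^{\oplus r}$ has negative degree and no sections; a global section vanishing along a covering family is zero, giving the vanishing. Riemann--Roch now reads $\chi(\mathcal{E}_0)=r\,\chi(\mathcal{O}_S)=r$ (using $c_1=c_2=0$ and $\chi(\mathcal{O}_S)=1$ for a rational surface), so $h^0(\mathcal{E}_0)\ge r>0$. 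Any nonzero section $\mathcal{O}_S\to\mathcal{E}_0$ has saturation an effective sub-line-bundle $\mathcal{O}_S(D)$ with $D\cdot A\le\mu(\mathcal{E}_0)=0$ by semistability and $D\cdot A\ge0$ since $D\ge0$; thus $D=0$, and because $\mathcal{E}_0$ is numerically flat (so that sub- and quotient objects are locally free and numerically flat, \cite{langer}) the section is a subbundle with numerically flat quotient $\mathcal{Q}$ of rank $r-1$, again trivial on rational curves. By induction $\mathcal{Q}\cong\mathcal{O}_S^{\oplus r-1}$ (the rank-one case being a numerically trivial, hence trivial, line bundle on $S$), and the extension $0\to\mathcal{O}_S\to\mathcal{E}_0\to\mathcal{O}_S^{\oplus r-1}\to0$ splits since $\mathrm{Ext}^1(\mathcal{O}_S^{\oplus r-1},\mathcal{O}_S)=H^1(S,\mathcal{O}_S)^{\oplus r-1}=0$ for a rational surface. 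Therefore $\mathcal{E}_0\cong\mathcal{O}_S^{\oplus r}$ and $\mathcal{E}\cong\mathcal{L}^{\oplus r}$.

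The main obstacle is precisely this last step in positive characteristic: passing from \emph{numerically flat and trivial on every rational curve} to \emph{globally trivial}. The $S$-fundamental group scheme of $S$ can a priori be nontrivial even though $S$ is simply connected, so I deliberately avoid representation-theoretic arguments and instead extract triviality from the geometry of rational curves. The two crucial non-formal inputs are the vanishing $h^2(\mathcal{E}_0)=0$, which uses free curves with $-K_S\cdot C\ge2$ to feed Riemann--Roch and produce sections, and the fact that the numerically flat sheaves form an abelian category (\cite{langer}), which keeps the induction inside the class of locally free sheaves. I expect the verifications that $\mathrm{Pic}(S)$ has a basis of smooth rational curves and that the saturated section is an honest subbundle to be routine but to require some care.
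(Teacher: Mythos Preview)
Your argument is correct but follows a genuinely different route from the paper. The paper proceeds birationally: it resolves the indeterminacy of a birational map $S\dashrightarrow\mathbb{P}^2$ to obtain $q\colon X\to S$ and $p\colon X\to\mathbb{P}^2$, observes that $\Lambda_{q^{*}\mathcal{E}}$ is nef, and then uses the blow\-up descent Lemma~\ref{blowup} (which rests on Lemma~\ref{line} and the Andreatta--Occhetta criterion) to produce a bundle $\mathcal{E}'$ on $\mathbb{P}^2$ with $q^{*}\mathcal{E}\otimes\mathcal{L}\cong p^{*}\mathcal{E}'$ and $\Lambda_{\mathcal{E}'}$ nef. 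On $\mathbb{P}^2$ the restriction to every line is balanced, hence trivial up to twist, so $\mathcal{E}'$ is uniform and therefore trivial; Lemma~\ref{lemma} then pushes the conclusion back down to $S$. Your approach stays on $S$ throughout: you first divide $c_1(\mathcal{E})$ by $r$ using the unimodular lattice $\mathrm{Pic}(S)$ and its basis of rational curves, reduce to a numerically flat $\mathcal{E}_0$, and then run a Riemann--Roch\,/\,section-producing induction, using very free curves to kill $h^{2}$ and the vanishing $H^{1}(S,\mathcal{O}_S)=0$ to split extensions. The paper's method packages the triviality question into the well-understood uniform-bundle theory on $\mathbb{P}^2$ and yields reusable descent lemmas; your method is more self-contained and avoids birational geometry, at the cost of invoking Langer's Bogomolov-type input (or, equivalently, the rational-chain argument that a section of a bundle trivial on every rational curve is nowhere vanishing) to ensure the quotient in the inductive step is locally free. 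One small correction: the basis of $\mathrm{Pic}(S)$ by classes of smooth rational curves need not consist of \emph{moving} curves (exceptional divisors are rigid), but this is irrelevant to your divisibility argument.
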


To show this we prepare following lemmas.

\begin{lemma}$($char$\;\geqslant0)$\label{line}
Let X be a smooth projective variety and $\mathcal{E}$ a vector bundle on X.
Assume that the normalized tautological line bundle $\Lambda=\Lambda_{\mathcal{E}}$ of $\mathcal{E}$ is nef.
Then for any rational curve $\gamma : \mathbb{P}^1 \rightarrow X$ on X, $\gamma^*(\mathcal{E})$ is trivial up to twist by a line bundle.
\end{lemma}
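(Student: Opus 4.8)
The plan is to reduce the statement to Miyaoka's theorem (Theorem \ref{miyaoka}) on the curve $\mathbb{P}^1$ and then to invoke Grothendieck's splitting theorem. The whole argument rests on the observation that nefness of the normalized tautological divisor is inherited under pullback.

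First I would base-change the projective bundle along $\gamma$, forming the fibre square
\[\xymatrix{
\mathbb{P}_{\mathbb{P}^1}(\gamma^*\mathcal{E})\ar[rr]^{\bar{\gamma}}\ar[d]_{\pi'} & & \mathbb{P}_X(\mathcal{E})\ar[d]^{\pi}\\
\mathbb{P}^1\ar[rr]^{\gamma} & & X.\\
}\]
Because the tautological divisor and the determinant are both compatible with this base change, one has $\bar{\gamma}^*\xi_{\mathcal{E}}=\xi_{\gamma^*\mathcal{E}}$ and $\bar{\gamma}^*\pi^*(\det\mathcal{E})=(\pi')^*(\det\gamma^*\mathcal{E})$; substituting these into the defining relation $r\Lambda_{\mathcal{E}}=r\xi_{\mathcal{E}}-\pi^*(\det\mathcal{E})$ gives $\Lambda_{\gamma^*\mathcal{E}}=\bar{\gamma}^*\Lambda_{\mathcal{E}}$. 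Since the pullback of a nef divisor by a morphism of projective varieties is again nef, $\Lambda_{\gamma^*\mathcal{E}}$ is nef.

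Next, the base of this new bundle is the curve $\mathbb{P}^1$, so I would apply Theorem \ref{miyaoka}: in characteristic $0$ it gives that $\gamma^*\mathcal{E}$ is $\mu$-semistable, and in characteristic $p>0$ that it is strongly $\mu$-semistable, hence in particular $\mu$-semistable. In all characteristics, then, $\gamma^*\mathcal{E}$ is a $\mu$-semistable bundle on $\mathbb{P}^1$. By Grothendieck's theorem it splits as $\gamma^*\mathcal{E}\cong\bigoplus_{i=1}^{r}\mathcal{O}_{\mathbb{P}^1}(a_i)$, and such a direct sum is $\mu$-semistable if and only if all the $a_i$ are equal: if some $a_i$ exceeded the average $(\sum_j a_j)/r$, the line subbundle $\mathcal{O}_{\mathbb{P}^1}(a_i)$ would have strictly larger slope and destabilize the bundle. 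Hence $a_1=\cdots=a_r=:a$ and $\gamma^*\mathcal{E}\cong\mathcal{O}_{\mathbb{P}^1}(a)\otimes\mathcal{O}_{\mathbb{P}^1}^{\oplus r}$, which is trivial up to twist by the line bundle $\mathcal{O}_{\mathbb{P}^1}(a)$.

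The argument presents no serious obstacle. The only points requiring care are verifying the compatibility of $\xi_{\mathcal{E}}$ and $\Lambda_{\mathcal{E}}$ with base change, so that nefness descends to the pullback, and keeping track of the characteristic when applying Theorem \ref{miyaoka}; since strong $\mu$-semistability implies ordinary $\mu$-semistability, the Grothendieck splitting step—where the actual content of \emph{triviality up to twist} resides—is uniform in the characteristic.
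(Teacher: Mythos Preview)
Your proof is correct. The paper's argument follows the same outline---pull back to $\mathbb{P}^1$, verify $\Lambda_{\gamma^*\mathcal{E}}=\bar\gamma^*\Lambda_{\mathcal{E}}$ is nef, take the Grothendieck splitting $\gamma^*\mathcal{E}\cong\bigoplus_i\mathcal{O}_{\mathbb{P}^1}(a_i)$, and conclude all $a_i$ coincide---but it carries out the last step by a direct intersection computation rather than by citing Theorem~\ref{miyaoka}: it lets $C\subset\mathbb{P}_{\mathbb{P}^1}(\gamma^*\mathcal{E})$ be the section corresponding to the quotient $\gamma^*\mathcal{E}\to\mathcal{O}_{\mathbb{P}^1}(a_1)$ with $a_1$ minimal, and computes $r\Lambda_{\gamma^*\mathcal{E}}\cdot C=ra_1-\sum_i a_i\geq 0$, forcing $a_1=\cdots=a_r$. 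This is of course exactly the content of Miyaoka's criterion specialized to $\mathbb{P}^1$, so the two arguments are really the same idea; the paper's version is slightly more self-contained and avoids the characteristic case split you need when invoking Theorem~\ref{miyaoka}, while your version makes the connection to semistability explicit.
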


\begin{proof}
We have a splitting :
\[ \gamma ^* \mathcal{E}\cong \mathcal{O}_{\mathbb{P}^1}(a_1)\oplus \mathcal{O}_{\mathbb{P}^1}(a_2)\oplus \cdots \mathcal{O}_{\mathbb{P}^1}(a_r), \]
where $a_1 \leqslant a_2 \leqslant \cdots \leqslant a_r$. 
We have  the morphism $\gamma^{\prime} : \mathbb{P}_{\mathbb{P}^1}(\gamma ^* \mathcal{E}) \rightarrow \mathbb{P}_X(\mathcal{E})$ indeced by $\gamma$.
We denote natural projections by $\pi : \mathbb{P}_X(\mathcal{E}) \rightarrow X$ and $\pi^{\prime} :  \mathbb{P}_{\mathbb{P}^1}(\gamma ^* \mathcal{E}) \rightarrow \mathbb{P}^1$ . 
Then we have
\[r\Lambda_{\gamma ^* \mathcal{E}}=r\xi_{\gamma ^* \mathcal{E}}-{\pi^{\prime}}^*  det(\gamma ^* \mathcal{E})=r\gamma^*\xi_{\mathcal{E}}-\gamma^* \pi^* det(\mathcal{E})=r\gamma^*\Lambda_{\mathcal{E}}.\]
Since $\Lambda_{\mathcal{E}}$ is nef by assumption, $\Lambda_{\gamma ^* \mathcal{E}}$ is also nef. 
Let C be the section of  $\gamma ^* \mathcal{E}$ associated with the quotient line bundle 
$\gamma ^* \mathcal{E} \rightarrow \mathcal{O}_{\mathbb{P}^1}(a_1) \rightarrow 0$.
We have
\[r\Lambda_{\gamma ^* \mathcal{E}}.C=r\xi_{\gamma ^* \mathcal{E}}.C-{\pi^{\prime}}^*  det(\gamma ^* \mathcal{E}).C=ra_1-\displaystyle \sum^{r}_{i=1}a_i\geqslant 0.\]
Therefore we get $a_1=a_2=\cdots =a_r$. Hence $\gamma ^* \mathcal{E}$ is trivial up to twist by a line bundle. 
\end{proof}

\begin{lemma}$($char$\;\geqslant0)$\label{lemma}
Let S be a smooth projective surface $\mathcal{E}$ a rank r vector bundle on S and $f : S^{\prime} \rightarrow S$ a blow-up of S at a point p. 
If $f^*(\mathcal{E})$ is isomorphic to a vector bundle $\mathcal{L}^{\oplus r}$ for some line bundle $\mathcal{L}$ on $S^{\prime}$, then $\mathcal{E}$ is also isomorphic to a vector bundle $\mathcal{M}^{\oplus r}$ for some line bundle $\mathcal{M}$ on S.
\end{lemma}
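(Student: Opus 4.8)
The plan is to descend the splitting of $f^*\mathcal{E}$ down the blow-up $f$, relying on the two standard properties of a point blow-up of a smooth surface: $f_*\mathcal{O}_{S'} \cong \mathcal{O}_S$, and $\mathrm{Pic}(S') = f^*\mathrm{Pic}(S) \oplus \mathbb{Z}[E]$ where $E \cong \mathbb{P}^1$ is the exceptional divisor. Both hold in arbitrary characteristic, so the proof will cover the stated range char $\geqslant 0$.

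First I would record an unconditional reduction. Since $\mathcal{E}$ is locally free, applying $f_*$ to the hypothesis $f^*\mathcal{E} \cong \mathcal{L}^{\oplus r}$ and using the projection formula together with $f_*\mathcal{O}_{S'}\cong\mathcal{O}_S$ gives
\[
\mathcal{E} \cong f_*f^*\mathcal{E} \cong f_*\bigl(\mathcal{L}^{\oplus r}\bigr) \cong (f_*\mathcal{L})^{\oplus r}.
\]
Thus $\mathcal{E}$ is already a direct sum of $r$ copies of the coherent sheaf $\mathcal{M} := f_*\mathcal{L}$, and the whole content of the lemma is to show that this $\mathcal{M}$ is in fact a line bundle. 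Note that $f_*\mathcal{L}$ need not be locally free for an arbitrary $\mathcal{L}$ — for instance $f_*\mathcal{O}_{S'}(-E)$ is the ideal sheaf of $p$ — so this step genuinely uses the hypothesis.

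To see that $f_*\mathcal{L}$ is a line bundle I would show that $\mathcal{L}$ descends, i.e. $\mathcal{L} \cong f^*\mathcal{N}$ for some line bundle $\mathcal{N}$ on $S$. Writing $\mathcal{L} = f^*\mathcal{N} + aE$ according to the decomposition of $\mathrm{Pic}(S')$ and taking determinants of the hypothesis yields $f^*\det\mathcal{E} \cong r\mathcal{L} = f^*(r\mathcal{N}) + raE$; comparing exceptional components forces $ra = 0$, hence $a = 0$. Alternatively, restricting $f^*\mathcal{E}$ to $E$ gives the trivial bundle $\mathcal{O}_E^{\oplus r}$, so $(\mathcal{L}|_E)^{\oplus r}\cong\mathcal{O}_E^{\oplus r}$ and therefore $\mathcal{L}|_E\cong\mathcal{O}_E$, which again kills the exceptional coefficient. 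Once $\mathcal{L}\cong f^*\mathcal{N}$, the projection formula gives $f_*\mathcal{L} \cong \mathcal{N}\otimes f_*\mathcal{O}_{S'}\cong\mathcal{N}$, so $\mathcal{M} = \mathcal{N}$ is a line bundle and $\mathcal{E}\cong\mathcal{N}^{\oplus r}$.

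There is no serious obstacle here: once the two blow-up facts are available, the push-forward step is formal, and the only point requiring its own (short) argument is the descent of $\mathcal{L}$, since a priori $\mathcal{L}$ could carry a nonzero multiple of $E$. The determinant comparison rules this out cleanly, and I would present that as the crux of the proof.
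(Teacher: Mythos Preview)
Your proof is correct and follows essentially the same route as the paper: first show that $\mathcal{L}$ descends to a line bundle on $S$, then push forward via the projection formula. The paper carries out the descent by restricting to the exceptional curve and invoking Atiyah's Krull--Schmidt theorem to conclude $\mathcal{L}|_E\cong\mathcal{O}_E$; your determinant comparison is a more elementary way to reach the same conclusion (and your alternative restriction argument is exactly the paper's method, since on $\mathbb{P}^1$ Krull--Schmidt reduces to comparing degrees). The only cosmetic difference is the order of operations: you push forward first and then argue that $f_*\mathcal{L}$ is invertible, whereas the paper first descends $\mathcal{L}$, twists to make $f^*(\mathcal{E}\otimes\mathcal{M}^{-1})$ trivial, and then pushes forward.
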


\begin{proof}
Let C be a exceptional divisor of f. Then $f^*(\mathcal{E})$ is trivial on C i.e. $f^*(\mathcal{E})|_C\cong {\mathcal{L}|_C}^{\oplus r}\cong {\mathcal{O}_C}^{\oplus r}$. 
By the Krull-Schmidt theorem of vector bundle \cite{atiyah}, we have $\mathcal{L}|_C \cong \mathcal{O}_C$. 
Therefore there exists a line bundle $\mathcal{M}$ such that  $f^*(\mathcal{M})\cong \mathcal{L}$. 
Hence we have $f^*(\mathcal{E}\otimes \mathcal{M}^{-1})\cong {\mathcal{O}_{S^{\prime}}}^{\oplus r}$.
Therefore we have $\mathcal{E}\otimes \mathcal{M}^{-1}\cong {\mathcal{O}_{S}}^{\oplus r}$.
\end{proof}

\begin{lemma}$($char$\;\geqslant0)$\label{blowup}
Let $\varphi : X \rightarrow X^{\prime}$ be the blow-up of smooth variety along a smooth subvariety $Y\subset X^{\prime}$ and $\mathcal{E}$
a vector bundle of rank r on X. 
Assume that $\Lambda_{\mathcal{E}}$ is nef. Then there exists a vector bundle $\mathcal{E}^{\prime}$ such that $\mathcal{E}\otimes \mathcal{L}=\varphi^*\mathcal{E}^{\prime}$
where $\mathcal{L}$ is a line bundle on X. 
Moreover $\Lambda_{\mathcal{E}^{\prime}}$ is nef.
\end{lemma}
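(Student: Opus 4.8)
The plan is to use the nefness of $\Lambda_{\mathcal{E}}$ along the fibres of the exceptional divisor to descend $\mathcal{E}$, after twisting by a line bundle, through $\varphi$, and then to transport nefness back down by a surjectivity argument. First I would recall the geometry of the blow-up. If $Y$ has codimension $1$ then $\varphi$ is an isomorphism and the statement is trivial, so assume $c:=\mathrm{codim}(Y,X')\ge 2$. The exceptional divisor is $E=\mathbb{P}_Y(N_{Y/X'})$, a $\mathbb{P}^{c-1}$-bundle over $Y$ whose fibres $F\cong\mathbb{P}^{c-1}$ are exactly the positive-dimensional fibres of $\varphi$. A line $\ell\subset F$ is a rational curve in $X$, so by Lemma \ref{line} the restriction $\mathcal{E}|_{\ell}$ is $\mathcal{O}_{\ell}(a)^{\oplus r}$ for some integer $a$. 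Thus $\mathcal{E}|_F$ is a uniform bundle on $\mathbb{P}^{c-1}$ with balanced splitting type $(a,\dots,a)$, whence $\mathcal{E}|_F\cong\mathcal{O}_F(a)^{\oplus r}$ by the classification of uniform bundles on projective space. Since $ra=c_1(\mathcal{E})\cdot\ell$ and all the fibre-lines $\ell$ are numerically equivalent in $X$ (as $Y$ is connected and $\ell$ spans the unique class of a $\varphi$-contracted curve), the integer $a$ is independent of $F$.

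Next I would untwist and descend. Because $\mathcal{O}_X(E)|_F\cong\mathcal{O}_F(-1)$ on every fibre, putting $\mathcal{L}:=\mathcal{O}_X(aE)$ gives $(\mathcal{E}\otimes\mathcal{L})|_F\cong\mathcal{O}_F^{\oplus r}$; over $X\setminus E$ the map $\varphi$ is an isomorphism, so the bundle $\mathcal{G}:=\mathcal{E}\otimes\mathcal{L}$ is trivial on every fibre of $\varphi$. Since $\varphi_*\mathcal{O}_X=\mathcal{O}_{X'}$ for a blow-up of a smooth variety along a smooth centre, the function $y\mapsto h^0(F_y,\mathcal{G}|_{F_y})$ is constantly $r$, while the higher cohomology of $\mathcal{G}|_{F_y}\cong\mathcal{O}_{\mathbb{P}^{c-1}}^{\oplus r}$ vanishes. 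By Grauert's theorem $\mathcal{E}^{\prime}:=\varphi_*\mathcal{G}$ is then locally free of rank $r$ and commutes with base change, and the evaluation map $\varphi^*\mathcal{E}^{\prime}\rightarrow\mathcal{G}$ is an isomorphism on each fibre, hence an isomorphism everywhere by Nakayama's lemma. This yields $\mathcal{E}\otimes\mathcal{L}=\varphi^*\mathcal{E}^{\prime}$, the first assertion.

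For the nefness of $\Lambda_{\mathcal{E}^{\prime}}$ I would invoke two formal facts. First, the normalized tautological divisor is unchanged by twisting, $\Lambda_{\mathcal{E}\otimes\mathcal{L}}=\Lambda_{\mathcal{E}}$, because $\xi$ shifts by $\pi^*c_1(\mathcal{L})$ while $\det$ shifts by $r\,c_1(\mathcal{L})$; hence $\Lambda_{\varphi^*\mathcal{E}^{\prime}}=\Lambda_{\mathcal{E}}$ is nef. Second, the morphism $\varphi$ induces a surjective proper morphism $\bar{\varphi}:\mathbb{P}_X(\varphi^*\mathcal{E}^{\prime})\rightarrow\mathbb{P}_{X^{\prime}}(\mathcal{E}^{\prime})$ with $\bar{\varphi}^*\Lambda_{\mathcal{E}^{\prime}}=\Lambda_{\varphi^*\mathcal{E}^{\prime}}$, exactly as in the pullback computation of Lemma \ref{line}. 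Since a divisor whose pullback by a surjective proper morphism is nef must itself be nef (lift any curve downstairs to a curve upstairs dominating it and compare degrees), $\Lambda_{\mathcal{E}^{\prime}}$ is nef.

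The main obstacle I expect lies in the first paragraph: passing from the line-by-line information of Lemma \ref{line} to triviality-up-to-twist on the whole fibre $\mathbb{P}^{c-1}$ genuinely requires the structure theory of balanced uniform bundles rather than a one-line cohomology estimate, and one must also verify that the twisting integer $a$ is globally constant so that $\mathcal{O}_X(aE)$ makes sense. Once $\mathcal{G}$ is shown to be trivial on all fibres of $\varphi$, the descent via Grauert and the transfer of nefness are routine.
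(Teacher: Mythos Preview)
Your argument is correct and follows the same overall strategy as the paper: use Lemma~\ref{line} on the lines in each exceptional fibre $F\cong\mathbb{P}^{c-1}$ to see that $\mathcal{E}|_F$ is a balanced uniform bundle, hence $\mathcal{O}_F(a)^{\oplus r}$; check that $a$ is constant; twist by $\mathcal{O}_X(aE)$; descend; and finally transport nefness down via the surjection $\bar{\varphi}$ and the twist-invariance $\Lambda_{\mathcal{E}\otimes\mathcal{L}}=\Lambda_{\mathcal{E}}$. The paper does exactly this, only it packages the descent step as a black-box citation of Theorem~\ref{andreatta} (Andreatta--Occhetta) rather than arguing it out.

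The one point where your write-up is slightly loose is the appeal to ``Grauert's theorem'' for the descent. Grauert's theorem, in the form you invoke (constant $h^0$ and vanishing higher cohomology on fibres $\Rightarrow$ pushforward locally free and compatible with base change), requires the sheaf to be flat over the base, and the blow-up $\varphi$ is not flat when $c\ge 2$. The conclusion you want is still true, but the honest justification uses the theorem on formal functions together with the vanishing $H^1(\mathbb{P}^{c-1},\mathcal{O}^{\oplus r})=0$ to lift a trivialization of $\mathcal{G}$ through the infinitesimal thickenings of each fibre; this rigidity argument is precisely what Andreatta--Occhetta prove and what the paper cites. So your route is not really different from the paper's---it just unpacks Theorem~\ref{andreatta} in situ, and in doing so you should either cite that result or replace ``Grauert'' by the formal-functions argument.
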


To prove this we use the following results.

\begin{theorem}$($\cite{andreatta-occhetta}, char$\;\geqslant0)$\label{andreatta}
Let $\varphi : X \rightarrow X^{\prime}$ be the blow-up of smooth variety along a smooth subvariety $Y\subset X^{\prime}$ and $\mathcal{E}$
a vector bundle on X. 
Assume that $\mathcal{E}$ is trivial for any fiber of $\varphi$. Then there exists a vector bundle $X^{\prime}$ such that $\mathcal{E}=\varphi^*\mathcal{E}^{\prime}$.
\end{theorem}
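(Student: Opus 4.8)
The plan is to produce the descended bundle explicitly as the pushforward $\mathcal{E}^{\prime} := \varphi_*\mathcal{E}$ and then to check that the canonical adjunction morphism recovers $\mathcal{E}$. Write $E \subset X$ for the exceptional divisor and $c = \mathrm{codim}_{X^{\prime}} Y$, so that $\varphi$ is an isomorphism over $X^{\prime} \setminus Y$ and restricts over each $y \in Y$ to the fiber $\varphi^{-1}(y) \cong \mathbb{P}^{c-1}$. The hypothesis says precisely that $\mathcal{E}|_{\varphi^{-1}(y)} \cong \mathcal{O}^{\oplus r}$ for every such $y$, the fibers over $X^{\prime} \setminus Y$ being single points; this is exactly the cohomological input needed to run descent along the proper birational morphism $\varphi$.

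First I would establish that $\varphi_*\mathcal{E}$ is locally free of rank $r$. Since $\varphi$ is projective and $X^{\prime}$ is smooth, hence reduced, I would invoke cohomology and base change. On a positive-dimensional fiber one has $H^0(\mathbb{P}^{c-1}, \mathcal{O}^{\oplus r}) = k(y)^{\oplus r}$ and $H^i(\mathbb{P}^{c-1}, \mathcal{O}^{\oplus r}) = 0$ for all $i > 0$, while over points of $X^{\prime} \setminus Y$ the fiber cohomology is $r$-dimensional in degree $0$ and vanishes in higher degree. Thus the function $y \mapsto h^0(\varphi^{-1}(y), \mathcal{E}|_{\varphi^{-1}(y)})$ is the constant $r$ and all higher direct images vanish, so Grauert's theorem gives that $\varphi_*\mathcal{E}$ is locally free of rank $r$ and that its formation commutes with base change, i.e. the natural map $(\varphi_*\mathcal{E}) \otimes k(y) \to H^0(\varphi^{-1}(y), \mathcal{E}|_{\varphi^{-1}(y)})$ is an isomorphism for every $y$.

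Next I would show the adjunction morphism $\alpha : \varphi^*\varphi_*\mathcal{E} \to \mathcal{E}$ is an isomorphism. Both sides are locally free of rank $r$, so it suffices to prove that $\alpha$ is surjective, and for this it suffices by Nakayama's lemma to check surjectivity at each closed point, which I would do by restricting $\alpha$ to the fibers $\varphi^{-1}(y)$ that cover $X$. On such a fiber $\alpha$ becomes the evaluation map $H^0(\varphi^{-1}(y), \mathcal{E}|_{\varphi^{-1}(y)}) \otimes \mathcal{O}_{\varphi^{-1}(y)} \to \mathcal{E}|_{\varphi^{-1}(y)}$, using the base-change isomorphism from the previous step; since $\mathcal{E}|_{\varphi^{-1}(y)}$ is trivial, this evaluation map is an isomorphism. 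Hence $\alpha$ is surjective at every closed point, therefore surjective, and a surjection between locally free sheaves of the same rank has locally free kernel of rank $0$, so it is an isomorphism. Setting $\mathcal{E}^{\prime} = \varphi_*\mathcal{E}$ then yields $\mathcal{E} = \varphi^*\mathcal{E}^{\prime}$.

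I expect the main obstacle to be the careful application of cohomology and base change: one must confirm that the higher cohomology of $\mathcal{E}$ on every fiber vanishes uniformly, so that Grauert's criterion applies and the base-change map is a genuine isomorphism rather than merely surjective. The smoothness of $X^{\prime}$ together with the explicit projective-space structure of the fibers makes this tractable, but it is precisely the step where the triviality-on-fibers hypothesis is used: without it neither the local freeness of $\varphi_*\mathcal{E}$ nor the fiberwise isomorphism of $\alpha$ would hold.
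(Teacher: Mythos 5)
The paper itself offers no proof of this statement (it is quoted from Andreatta--Occhetta), so your proposal has to stand on its own, and it has a genuine gap at its central step. Your skeleton is the standard and correct one: set $\mathcal{E}^{\prime}=\varphi_*\mathcal{E}$, prove local freeness plus base change, then show the adjunction $\varphi^*\varphi_*\mathcal{E}\rightarrow\mathcal{E}$ is a surjection of locally free sheaves of equal rank, hence an isomorphism. The problem is the tool you invoke for the key step: Grauert's theorem and the cohomology-and-base-change theorems require the sheaf to be \emph{flat over the base}, not just that the base be reduced and the fiberwise $h^0$ constant. Here flatness fails in an essential way: a blow-up along a nontrivial smooth center is never flat (the fiber dimension jumps from $0$ to $c-1$), and since $\mathcal{E}$ is locally isomorphic to $\mathcal{O}_X^{\oplus r}$, flatness of $\mathcal{E}$ over $X^{\prime}$ is equivalent to flatness of $\varphi$. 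Without flatness even semicontinuity can fail (fiber $h^0$ can jump downward), so constancy of $h^0$ buys you nothing by citation. A telling sanity check: the rank-one special case $\mathcal{E}=\mathcal{O}_X$ of exactly what you need --- $\varphi_*\mathcal{O}_X$ locally free with base-change isomorphisms --- is the statement $\varphi_*\mathcal{O}_X=\mathcal{O}_{X^{\prime}}$, which is itself a theorem (proved via Zariski's main theorem and normality, or via formal functions), not a consequence of Grauert. As written, your argument assumes at its core the kind of statement that has to be proved.

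The gap is reparable because the conclusion is true; one must replace Grauert by machinery that needs no flatness. The standard route is the theorem on formal functions: for $y\in Y$ one has $(\varphi_*\mathcal{E})^{\wedge}_y\cong\varprojlim_n H^0\bigl(F_n,\mathcal{E}|_{F_n}\bigr)$, where $F_n$ is the $n$-th infinitesimal neighbourhood of the fiber $F\cong\mathbb{P}^{c-1}$. For a smooth blow-up the graded pieces of this filtration are $\mathcal{E}|_F$ tensored with sheaves built from non-negative twists $\mathcal{O}_{\mathbb{P}^{c-1}}(j)$, $j\geqslant 0$, so triviality of $\mathcal{E}|_F$ gives vanishing of the relevant $H^1$'s, surjectivity of the inverse system, and the ability to lift a trivializing frame of $\mathcal{E}|_F$ through all $F_n$; this yields freeness of $(\varphi_*\mathcal{E})^{\wedge}_y$ (hence of the stalk) and surjectivity of the adjunction map along $F$ by Nakayama, after which your final step (a surjection of locally free sheaves of the same rank is an isomorphism) goes through unchanged. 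Alternatively, one can first descend $\mathcal{E}|_E$ along $E=\mathbb{P}(N^{\vee}_{Y/X^{\prime}})\rightarrow Y$, where the morphism \emph{is} flat and Grauert genuinely applies, and then glue with the isomorphism off $E$; but some infinitesimal argument across the exceptional locus cannot be avoided.
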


\begin{proof}[Proof of Lemma \ref{blowup}]
By virtue of theorem \ref{andreatta}, we only have to show that $\mathcal{E}$ is trivial on the fiber F of $\varphi$. In this case F is isomorphic to $\mathbb{P}^{s-1}$ where $s$ is the codimension of Y in $X^{\prime}$. From Proposision \ref{line} we know that $\mathcal{E}|_F$ is uniform bundle on F. Therefore we have $\mathcal{E}|_F \cong \displaystyle \oplus ^{r} \mathcal{O}_{\mathbb{P}^{s-1}}(a)$. For two distinct fibers $F_1$ and $F_2$ of $\varphi$, we have $c_1(\mathcal{E} |_{F_1})=c_1(\mathcal{E}) |_{F_1}=c_1(\mathcal{E}) |_{F_2}=c_1(\mathcal{E} |_{F_2})$. Therefore $a$ is independent from the choice of F. 
Hence there exist a vector bundle $\mathcal{E}^{\prime}$ on $X^{\prime}$ and a integer k such that $\mathcal{E}\otimes \mathcal{O}_X(aE)=\varphi^*\mathcal{E}^{\prime}$ where E is the exceptional divisor of $\varphi$.
In this case we have $\Lambda_{\mathcal{E}}=\varphi^{*}\Lambda_{\mathcal{E}^{\prime}}$. 
Since  $\Lambda_{\mathcal{E}}$ is nef and $\varphi$ is surjective, $\Lambda_{\mathcal{E}^{\prime}}$ is also nef.
\end{proof}

\begin{proof}[Proof of Theorem \ref{positive}]
Since S is rational, we have a birational map $f : S \dashrightarrow \mathbb{P}^2$.
Let
 \[\xymatrix{
& X \ar[rd]^p \ar[ld]_q& \\  
S \ar@{-->}[rr]_{\varphi} &  & \mathbb{P}^2  \\
}\]
be a resolution of indeterminacy of $\varphi$.
By assumption we can show that $\Lambda_{q^{*}\mathcal{E}}$ is nef. Since p is a composition of blow-ups of a point, we can use Theorem \ref{blowup} and we get 
a vector bundle $\mathcal{E}^{\prime}$ on $\mathbb{P}^2$ such that $q^{*}\mathcal{E}\otimes \mathcal{L}=p^*\mathcal{E}^{\prime}$
for some line bundle $\mathcal{L}$ on X.
Moreover $\Lambda_{\mathcal{E}^{\prime}}$ is nef.
For any line $l$ in $\mathbb{P}^2$ we have a decomposition $\mathcal{E}^{\prime}|_l \cong \bigoplus\mathcal{O}^r_l$ by the argument as in proof of Theorem \ref{rc}.
In particular $\mathcal{E}^{\prime}$ is a uniform vector bundle on $\mathbb{P}^2$.
Hence $\mathcal{E}^{\prime}$ is isomorphic to a trivial vector bundle. Therefore $q^{*}\mathcal{E}\otimes \mathcal{L}=p^*\mathcal{E}^{\prime}$ is a trivial vector bundle on X.
From Lemma \ref{lemma} we know that $\mathcal{E}$ is trivial up to twist by a line bundle on S.
\end{proof}

If X is not rationally connected, even if X is uniruled, there is a non-trivial vector bundle $\mathcal{E}$ such that the normalized tautological divisor is nef. 

\begin{example}$($char$\;\geqslant0)$
Let C be a smooth elliptic curve and $Y=C\times \mathbb{P}^1$. 
Then Y is uniruled and $h^1(Y, \mathcal{O}_Y)=h^1(C, \mathcal{O}_C)=1$. 
Let $\mathcal{E}$ be a nontrivial extension of trivial line bundles $0 \rightarrow \mathcal{O}_C \rightarrow \mathcal{E} \rightarrow \mathcal{O}_C \rightarrow 0$. 
Then $\Lambda_{\mathcal{E}}=\xi_{\mathcal{E}}$ is nef. 
\end{example}

However we get following results. 

\begin{theorem}$($char$\;\geqslant0)$
Let $\varphi : X=\mathbb{P}_Y(\mathcal{F}) \rightarrow Y$ be a $\mathbb{P}^d$-bundle on a smooth projective variety Y, $\mathcal{E}$ a vector bundle of rank r on X and $\pi : \mathbb{P}_X(\mathcal{E}) \rightarrow X$ the natural projection. Assume the normalized tautological divisor $\Lambda_{\mathcal{E}}$ is nef. Then there exists a vector bundle $\mathcal{E}^{\prime}$ on Y such that $\mathcal{E}=\varphi^*\mathcal{E}^{\prime}$ such that $\Lambda_{\mathcal{E}^{\prime}}$ is nef.
\end{theorem}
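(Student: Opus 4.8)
The plan is to mirror the proof of Lemma \ref{blowup}, replacing the fibers $\mathbb{P}^{s-1}$ of a blow-up by the fibers $\mathbb{P}^d$ of $\varphi$. Write $\xi$ for the relative tautological divisor of $X=\mathbb{P}_Y(\mathcal{F})\to Y$, so that $\xi|_F\cong\mathcal{O}_{\mathbb{P}^d}(1)$ on every fiber $F$. First I would analyse $\mathcal{E}|_F$. Any line $\ell\subset F$ is a rational curve in $X$, so by Lemma \ref{line} the restriction $\mathcal{E}|_\ell$ is trivial up to twist, i.e. $\mathcal{E}|_\ell\cong\mathcal{O}_\ell(a_\ell)^{\oplus r}$. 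Since all lines in $F\cong\mathbb{P}^d$ are numerically equivalent and $a_\ell=c_1(\mathcal{E}).\ell/r$, the integer $a_\ell=:a_F$ is independent of $\ell$, so $\mathcal{E}|_F$ is uniform of constant splitting type $(a_F,\dots,a_F)$. The next step is to upgrade uniformity to splitting: a vector bundle on $\mathbb{P}^d$ that is trivial on every line is trivial, hence $\mathcal{E}|_F(-a_F)$ is trivial and $\mathcal{E}|_F\cong\mathcal{O}_{\mathbb{P}^d}(a_F)^{\oplus r}$.

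Next I would show $a_F$ does not depend on $F$. For two fibers $F_1,F_2$ the classes $c_1(\mathcal{E})|_{F_i}=c_1(\mathcal{E}|_{F_i})$ equal $r\,a_{F_i}$ times the line class, and are restrictions of the single class $c_1(\mathcal{E})$ to numerically equivalent cycles, so $a_{F_1}=a_{F_2}=:a$. Then $\mathcal{G}:=\mathcal{E}\otimes\mathcal{O}_X(-a\xi)$ satisfies $\mathcal{G}|_F\cong\mathcal{O}_{\mathbb{P}^d}^{\oplus r}$ for every fiber $F$. By the projective-bundle analogue of Theorem \ref{andreatta} --- concretely, by cohomology and base change, since $\varphi_*\mathcal{G}$ is then locally free of rank $r$ and the evaluation $\varphi^*\varphi_*\mathcal{G}\to\mathcal{G}$ is an isomorphism once $\mathcal{G}$ is fibrewise trivial --- there is a vector bundle $\mathcal{E}^{\prime}:=\varphi_*\mathcal{G}$ on $Y$ with $\mathcal{G}=\varphi^*\mathcal{E}^{\prime}$, that is $\mathcal{E}=\varphi^*\mathcal{E}^{\prime}\otimes\mathcal{O}_X(a\xi)$. (Here, exactly as in Lemma \ref{blowup}, the equality $\mathcal{E}=\varphi^*\mathcal{E}^{\prime}$ is to be understood up to twist by the line bundle $\mathcal{O}_X(a\xi)$, which is necessary since $\mathcal{O}_X(\xi)$ is not itself a pullback from $Y$.)

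Finally I would check the nefness of $\Lambda_{\mathcal{E}^{\prime}}$. Since $\Lambda$ is invariant under twisting by line bundles, $\Lambda_{\varphi^*\mathcal{E}^{\prime}}=\Lambda_{\mathcal{E}\otimes\mathcal{O}_X(-a\xi)}=\Lambda_{\mathcal{E}}$ is nef. The morphism $\bar{\varphi}:\mathbb{P}_X(\varphi^*\mathcal{E}^{\prime})\to\mathbb{P}_Y(\mathcal{E}^{\prime})$ induced by $\varphi$ satisfies $\Lambda_{\varphi^*\mathcal{E}^{\prime}}=\bar{\varphi}^*\Lambda_{\mathcal{E}^{\prime}}$, and $\bar{\varphi}$ is surjective, so precisely as at the end of the proof of Lemma \ref{blowup} the nefness of $\bar{\varphi}^*\Lambda_{\mathcal{E}^{\prime}}$ forces $\Lambda_{\mathcal{E}^{\prime}}$ to be nef. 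The main obstacle I expect is the passage from uniformity to genuine fibrewise triviality together with the descent step: this is exactly where the hypothesis that the fibers are projective spaces $\mathbb{P}^d$ (rather than arbitrary varieties) is essential, both to invoke the triviality-on-lines theorem and to guarantee that the fibrewise trivial bundle $\mathcal{G}$ descends along $\varphi$.
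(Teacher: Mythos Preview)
Your proposal is correct and follows essentially the same route as the paper's own proof: restrict to a fiber $F\cong\mathbb{P}^d$, use Lemma~\ref{line} to see $\mathcal{E}|_F$ is uniform of constant splitting type and hence $\cong\mathcal{O}_{\mathbb{P}^d}(a)^{\oplus r}$, observe $a$ is fiber-independent, twist by $-a\xi$ and descend, then deduce nefness of $\Lambda_{\mathcal{E}'}$ from surjectivity of $\bar\varphi$ and $\Lambda_{\mathcal{E}}=\bar\varphi^*\Lambda_{\mathcal{E}'}$. The paper's proof is terser and leaves the uniformity-to-triviality step and the descent step implicit (indeed it writes ``fiber of $\pi$'' where $\varphi$ is meant), whereas you spell these out; you also correctly flag that the conclusion $\mathcal{E}=\varphi^*\mathcal{E}'$ must be read up to the twist by $\mathcal{O}_X(a\xi)$, exactly as in Lemma~\ref{blowup}.
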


\begin{proof}
Let F be a fiber of $\pi$. Then we have $\mathcal{E}|_F \cong \displaystyle \bigoplus^{r} \mathcal{O}_{\mathbb{P}^{d}}(a)$ where $a$ is an integer. Since $a$ is independent from 
the choice of the fiber F. Therefore $\mathcal{E}\otimes \mathcal{O}(-a\xi_{\mathcal{F}})=\pi^*\mathcal{E}^{\prime}$ where $\mathcal{E}^{\prime}$ is a rank r vector bundle on Y.
Because $\pi^{\prime} : \mathbb{P}_X(\mathcal{E}) \rightarrow \mathbb{P}_Y(\mathcal{E}^{\prime})$ is surjective and 
$\Lambda_{\mathcal{E}}={\pi^{\prime}}^*\Lambda_{\mathcal{E}^{\prime}}$, we have $\Lambda_{\mathcal{E}^{\prime}}$ is nef.
\end{proof}

\section{nefness of normalized tautological divisor of tangent bundle}
In this section we consider the nefness of the normalized tautological divisor of tangent bundle.

\begin{proposition}$($char$\;\geqslant0)$\label{nonnef}
Let X be a smooth projective variety which contains a rational curve $f : \mathbb{P}^1 \rightarrow X$. 
Then the normalized tautological divisor of tangent bundle $\Lambda_{\mathcal{T}_X}$ of X is not nef.
\end{proposition}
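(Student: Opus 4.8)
The plan is to argue by contradiction, extracting from the nefness of $\Lambda_{\mathcal{T}_X}$ a uniform lower bound on the anticanonical degree of \emph{every} rational curve, and then to violate it by producing one rational curve of small degree. We may assume $\dim X = r \geq 2$, since for $\dim X = 1$ one has $\mathbb{P}_X(\mathcal{T}_X)=X$ and $\Lambda_{\mathcal{T}_X}=0$, which is nef. So suppose $\Lambda_{\mathcal{T}_X}$ is nef. First I would invoke Lemma \ref{line}: for an arbitrary rational curve $\gamma : \mathbb{P}^1 \rightarrow X$ the pullback is trivial up to twist, say $\gamma^*\mathcal{T}_X \cong \mathcal{O}_{\mathbb{P}^1}(a)^{\oplus r}$. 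Since $\gamma$ is nonconstant its differential is a nonzero map $d\gamma : \mathcal{T}_{\mathbb{P}^1} = \mathcal{O}_{\mathbb{P}^1}(2) \rightarrow \gamma^*\mathcal{T}_X \cong \mathcal{O}_{\mathbb{P}^1}(a)^{\oplus r}$, and a nonzero homomorphism out of $\mathcal{O}_{\mathbb{P}^1}(2)$ forces $a \geq 2$. Hence
\[
-K_X \cdot \gamma_*[\mathbb{P}^1] = \deg \gamma^*\mathcal{T}_X = r\,a \geq 2r
\]
for every rational curve $\gamma$ on $X$.

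Applied to the given rational curve this already shows $K_X$ is not nef. The next step is to produce a rational curve of small anticanonical degree: by Mori's bend-and-break theorem, valid over an algebraically closed field of arbitrary characteristic, there is a rational curve $C\subset X$ with $0 < -K_X \cdot C \leq \dim X + 1 = r+1$. But the estimate of the previous paragraph applied to $C$ gives $-K_X \cdot C \geq 2r$, and since $r \geq 2$ we have $2r > r+1$. This contradiction shows $\Lambda_{\mathcal{T}_X}$ cannot be nef.

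The point that requires care, and which I expect to be the only nontrivial input, is that a single arbitrarily chosen rational curve need not detect the failure of nefness. For example the diagonal $\delta:\mathbb{P}^1\hookrightarrow\mathbb{P}^1\times\mathbb{P}^1$ pulls $\mathcal{T}_{\mathbb{P}^1\times\mathbb{P}^1}$ back to the balanced bundle $\mathcal{O}(2)\oplus\mathcal{O}(2)$, so there the lower bound $-K_X\cdot\delta=2r$ is attained and no contradiction arises on that curve; one genuinely has to replace the given curve by one of minimal degree. This is exactly the reduction furnished by bend-and-break, and once such a low-degree curve is in hand the contradiction with the slope bound coming from Lemma \ref{line} together with the differential is immediate. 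I would finally remark that, because both Lemma \ref{line} and bend-and-break hold in arbitrary characteristic, the argument is uniform in $p$ and needs no separate appeal to (strong) semistability beyond what Lemma \ref{line} already encodes.
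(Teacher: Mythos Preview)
Your argument is correct and follows essentially the same route as the paper: invoke Lemma~\ref{line} to get $\gamma^*\mathcal{T}_X\cong\mathcal{O}(a)^{\oplus r}$ with $a\geq 2$, deduce $-K_X\cdot\gamma\geq 2r$ for every rational curve, and then contradict this with a bend-and-break rational curve of degree at most $r+1$. You supply two details the paper leaves implicit---the justification $a\geq 2$ via the nonzero differential $\mathcal{O}(2)\to\mathcal{O}(a)^{\oplus r}$, and the disposal of the $r=1$ case---but the structure is identical.
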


\begin{proof}
By Lemma \ref{line} we know that $f^*(\mathcal{T}_X)\cong\mathcal{O}(a)^{\oplus n}$ for some integer $a\geqslant2$ for any rational curve $f : \mathbb{P}^1 \rightarrow X$. 
Therefore we have $deg(f^*(-K_X))=na>0$. In particular $K_X$ is not nef.
If $K_X$ is not nef, then we can find a rational curve $g : \mathbb{P}^1 \rightarrow X$ on X such that $deg(g^*(-K_X))\leqslant n+1$ by Theorem 1.13 in \cite{kollar-mori}. 
This is a contradiction.
\end{proof}

From this Proposition, we immediately prove the following result.
\begin{corollary}$($char$\;\geqslant0)$\label{minimal}
Let X be a smooth projective variety over algebraically closed field $k=\bar{k}$ of arbitrary characteristic. 
If $K_X$ is not nef then the normalized tautological divisor of tangent bundle $\Lambda_{\mathcal{T}_X}$ is not nef.
\end{corollary}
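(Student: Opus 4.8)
The plan is to deduce this directly from Proposition \ref{nonnef} by showing that the failure of nefness of $K_X$ forces the existence of a rational curve. Proposition \ref{nonnef} already establishes that any smooth projective variety carrying a rational curve has non-nef $\Lambda_{\mathcal{T}_X}$, so the only gap to bridge is the implication ``$K_X$ not nef $\Rightarrow$ $X$ contains a rational curve.''

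First I would invoke the bend-and-break mechanism underlying Mori's Cone Theorem, precisely in the form of the result of \cite{kollar-mori} that is already used inside the proof of Proposition \ref{nonnef}. When $K_X$ is not nef there is an irreducible curve $C$ with $K_X \cdot C < 0$, and bend-and-break then produces a rational curve $g : \mathbb{P}^1 \rightarrow X$ with $0 < -K_X \cdot g_*[\mathbb{P}^1] \leqslant \dim X + 1$. In particular $X$ contains a rational curve.

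With such a curve in hand, Proposition \ref{nonnef} applies verbatim and yields that $\Lambda_{\mathcal{T}_X}$ is not nef, which is exactly the desired conclusion. In fact the proof of Proposition \ref{nonnef} already runs this argument in the contrapositive direction, so here the work is only to observe that the hypothesis ``$X$ contains a rational curve'' of that proposition is automatically supplied by the non-nefness of $K_X$.

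The only point that requires genuine care is the characteristic-independence of this production of rational curves: in positive characteristic the argument proceeds through the Frobenius morphism, while in characteristic zero one reduces modulo $p$. This is precisely the reason the statement is annotated ``char $\geqslant 0$,'' and it is exactly the content of the cited result from \cite{kollar-mori}; hence no new technical obstacle arises beyond quoting it, and I do not expect any essential difficulty in this step.
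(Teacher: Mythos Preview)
Your proposal is correct and matches the paper's approach exactly: the paper states the corollary as an immediate consequence of Proposition~\ref{nonnef}, and the only content to add is precisely the Mori/bend-and-break step (already invoked inside that proposition's proof via \cite{kollar-mori}) producing a rational curve from the non-nefness of $K_X$. There is nothing to change.
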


\begin{corollary}$($char$\;\geqslant0)$
Let S be a smooth projective surface over algebraically closed field $k=\bar{k}$ of arbitrary characteristic of negative Kodaira dimension. 
Then the normalized tautological divisor of tangent bundle $\Lambda_{\mathcal{T}_S}$ is not nef.
\end{corollary}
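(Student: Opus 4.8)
The plan is to reduce to one of the two results just established, namely Proposition \ref{nonnef} (existence of a rational curve forces $\Lambda_{\mathcal{T}_S}$ to be non-nef) or Corollary \ref{minimal} (non-nefness of $K_S$ forces $\Lambda_{\mathcal{T}_S}$ to be non-nef), and to feed in the Enriques--Kodaira classification of surfaces, which is where the hypothesis $\kappa(S)=-\infty$ is genuinely used.

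First I would recall that a smooth projective surface $S$ with $\kappa(S)=-\infty$ is birationally ruled. In characteristic zero this is the classical Enriques classification; in positive characteristic it is the Bombieri--Mumford classification, so the statement is available in arbitrary characteristic, as the hypothesis $k=\bar{k}$ of arbitrary characteristic demands. In particular $S$ is uniruled and therefore contains a rational curve $f : \mathbb{P}^1 \rightarrow S$. Then I would simply invoke Proposition \ref{nonnef}: since $S$ contains a rational curve, $\Lambda_{\mathcal{T}_S}$ is not nef, and this finishes the proof.

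Alternatively, avoiding Proposition \ref{nonnef} and using Corollary \ref{minimal} instead, I would argue that $K_S$ is not nef. Indeed, if $K_S$ were nef then $S$ would be a minimal surface, and a minimal surface carrying a nef canonical divisor has $\kappa(S)\geqslant 0$ by the classification, since the minimal models of negative Kodaira dimension, namely $\mathbb{P}^2$ and the geometrically ruled surfaces $\mathbb{P}_C(\mathcal{E})$, all have non-nef canonical class. This contradicts $\kappa(S)=-\infty$, so $K_S$ is not nef, and Corollary \ref{minimal} then yields the claim.

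The only real content, and hence the main obstacle, is the appeal to the classification of surfaces to produce a rational curve (equivalently, to show that $K_S$ is not nef) in arbitrary characteristic; everything after that is a one-line citation of the already-proved Proposition \ref{nonnef} or Corollary \ref{minimal}. The point to be careful about is simply to cite the positive-characteristic classification, so that the label ``char $\geqslant 0$'' in the statement is honest and the reduction is not tacitly assuming the characteristic-zero theory.
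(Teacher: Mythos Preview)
Your proposal is correct and matches the paper's intent: the corollary is stated there without an explicit proof, as an immediate consequence of Proposition~\ref{nonnef} (or equivalently Corollary~\ref{minimal}) together with the classification of surfaces showing that $\kappa(S)=-\infty$ implies $S$ is ruled (hence contains a rational curve, equivalently $K_S$ is not nef). Your care in invoking the Bombieri--Mumford classification for the positive-characteristic case is appropriate and is the only point the paper leaves implicit.
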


In the case where the canonical divisor is numerical trivial, the nefness of the normalized tautological divisor of tangent bundle is equivalent to the nefness of its tangent bundle.
F.Campana and T.Peternell proved that in this case there is an $\acute{e}tale$ covering $T \rightarrow X$ from abelian variety A.

\begin{theorem}[\cite{campana} Theorem 2.3]\label{campana}
Let X be a smooth projectie manifold. Assume that the tangent bundle $\mathcal{T}_X$ is nef and $K_X$ is nef,
then the canonical bundle is numerical trivial $K_X\equiv 0$ and there is an $\acute{e}tale$ covering $A \rightarrow X$ from abelian variety A.
\end{theorem}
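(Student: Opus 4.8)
The plan is to first force $K_X$ to be numerically trivial, then to extract the vanishing of the second Chern class from Theorem \ref{nakayama}, and finally to pass to differential geometry in order to upgrade Ricci-flatness to genuine flatness. First I would observe that, since $\mathcal{T}_X$ is nef, its determinant $\det\mathcal{T}_X = -K_X$ is a nef line bundle. Combined with the hypothesis that $K_X$ itself is nef, this gives $K_X\cdot C=0$ for every irreducible curve $C$, whence $K_X\equiv 0$. This already establishes the first assertion of the theorem; write $n=\dim X$ for the remainder.

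Next, because $\det\mathcal{T}_X=-K_X\equiv 0$, the normalized tautological divisor satisfies $\Lambda_{\mathcal{T}_X}\equiv\xi_{\mathcal{T}_X}$, and the nefness of $\mathcal{T}_X$ makes $\xi_{\mathcal{T}_X}$ nef; hence $\Lambda_{\mathcal{T}_X}$ is nef. I can therefore apply Theorem \ref{nakayama} with $\mathcal{E}=\mathcal{T}_X$, which has rank $n$: it yields that $\mathcal{T}_X$ is $\mu$-semistable and that
\[
\Bigl(c_2(\mathcal{T}_X)-\frac{2n}{n-1}\,c_1^2(\mathcal{T}_X)\Bigr)\cdot A^{n-2}=0
\]
for an ample divisor $A$. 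Since $c_1(\mathcal{T}_X)=-K_X\equiv 0$, this collapses to $c_2(X)\cdot A^{n-2}=0$.

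The decisive step is the passage to a flat metric. As $X$ is projective with $K_X\equiv 0$, Yau's theorem furnishes a Ricci-flat K\"ahler metric $\omega$. For such a metric the Ricci (trace) part of the curvature tensor vanishes, so the Chern--Weil representative of $c_2(X)\wedge\omega^{n-2}$ equals a positive universal constant times the pointwise squared norm $\|R\|^2$ of the full curvature tensor, integrated against the volume form. The vanishing $c_2(X)\cdot\omega^{n-2}=0$ from the previous step then forces $R\equiv 0$, so $\omega$ is flat. Identifying $\int_X c_2(X)\wedge\omega^{n-2}$ with a positive multiple of $\int_X\|R\|^2\,\omega^{n}$ in the Ricci-flat case is precisely where the argument does real work, and I expect this curvature-integral computation to be the main obstacle.

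Finally, a compact K\"ahler manifold carrying a flat metric has universal cover isometric to $\mathbb{C}^n$ with $\pi_1(X)$ acting by Euclidean isometries; by the Bieberbach theorems a finite-index subgroup consists of translations, producing a finite \'etale cover $T\to X$ with $T$ a complex torus. Since $X$ is projective, so is $T$, and a projective complex torus is an abelian variety $A$. This yields the desired \'etale covering $A\to X$ and finishes the proof.
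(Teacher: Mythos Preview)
The paper does not supply a proof of this theorem at all: it is quoted verbatim from Campana--Peternell \cite{campana} and used as a black box, so there is no ``paper's own proof'' to compare against.

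Your argument is correct and is essentially the classical route to this result. A couple of remarks on how it lines up with the original Campana--Peternell approach. First, your use of Theorem~\ref{nakayama} to extract $c_2(X)\cdot A^{n-2}=0$ is anachronistic relative to \cite{campana} (Nakayama's theorem came later); the more direct observation is that once $K_X\equiv 0$ both $\mathcal{T}_X$ and its dual $\Omega_X$ are nef, i.e.\ $\mathcal{T}_X$ is numerically flat, and numerically flat bundles have vanishing Chern classes in all degrees. Either way one lands at $c_1\equiv 0$ and $c_2\cdot[\omega]^{n-2}=0$. Second, your Yau/Chern--Weil step is exactly the standard one: choose the Ricci-flat metric in the ample class $[A]$, so that the Chern--Weil representative of $c_2\wedge\omega^{n-2}$ becomes a nonnegative multiple of $\|R\|^2\,\omega^n$, and the vanishing integral forces flatness. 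The Bieberbach conclusion is then routine. So your proposal is a valid proof; it simply substitutes Theorem~\ref{nakayama} for the numerical-flatness shortcut that the cited source would use.
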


\begin{corollary}
Let S be a smooth projective complex surface of Kodaira dimension zero. If the normalized tautological divisor of tangent bundle is nef, there is an $\acute{e}tale$ covering $A \rightarrow S$ from abelian surface A.
\end{corollary}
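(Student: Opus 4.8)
The plan is to reduce the statement to the hypotheses of the Campana--Peternell theorem (Theorem \ref{campana})---namely that both $\mathcal{T}_S$ and $K_S$ are nef---and then invoke it directly.

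First I would extract nefness of the canonical divisor. Since $\Lambda_{\mathcal{T}_S}$ is nef by assumption, the contrapositive of Corollary \ref{minimal} shows that $K_S$ is nef. A smooth projective surface with nef canonical divisor carries no $(-1)$-curve (such a curve $E$ would satisfy $K_S\cdot E=-1$ by adjunction), hence $S$ is minimal. By the Enriques--Kodaira classification a minimal surface of Kodaira dimension zero has numerically trivial canonical class, so $K_S\equiv 0$ (concretely $S$ is abelian, K3, Enriques, or bielliptic).

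Next I would upgrade nefness of $\Lambda_{\mathcal{T}_S}$ to nefness of the tangent bundle itself. Writing $r=\mathrm{rank}\,\mathcal{T}_S=2$ and $\det\mathcal{T}_S=-K_S$, the defining relation reads
\[2\Lambda_{\mathcal{T}_S}=2\xi_{\mathcal{T}_S}-\pi^*(\det\mathcal{T}_S)=2\xi_{\mathcal{T}_S}+\pi^*K_S.\]
Since $K_S\equiv 0$, the pullback $\pi^*K_S$ is numerically trivial, so $\Lambda_{\mathcal{T}_S}$ and $\xi_{\mathcal{T}_S}$ are numerically equivalent; as nefness depends only on the numerical class, nefness of $\Lambda_{\mathcal{T}_S}$ is equivalent to nefness of $\xi_{\mathcal{T}_S}$, i.e. to nefness of $\mathcal{T}_S$. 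Hence $\mathcal{T}_S$ is nef. With $\mathcal{T}_S$ and $K_S$ both nef, Theorem \ref{campana} produces an \'etale covering $A\to S$ from an abelian variety $A$, and since $\dim S=2$ the variety $A$ is an abelian surface, which is the desired conclusion.

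The argument is essentially a bookkeeping assembly of the cited inputs, so the genuine mathematical weight sits in two places. The first is the classification step $K_S \text{ nef},\ \kappa(S)=0\Rightarrow K_S\equiv 0$; this is the only point that uses the full Enriques--Kodaira classification rather than a formal manipulation, and is where I must be careful. The second, and the real engine, is Campana--Peternell (Theorem \ref{campana}), whose proof over $\mathbb{C}$ is what forces the abelian structure; this is also why the hypothesis that $S$ is a complex surface cannot be dropped here.
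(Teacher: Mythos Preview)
Your proof is correct and follows essentially the same route the paper intends. The paper does not write out a proof of this corollary, but the sentence immediately preceding Theorem~\ref{campana} (``In the case where the canonical divisor is numerically trivial, the nefness of the normalized tautological divisor of the tangent bundle is equivalent to the nefness of its tangent bundle'') together with Corollary~\ref{minimal} makes clear that the intended argument is exactly yours: use Corollary~\ref{minimal} to get $K_S$ nef, pass to a minimal surface with $\kappa=0$ so that $K_S\equiv 0$, identify $\Lambda_{\mathcal{T}_S}$ with $\xi_{\mathcal{T}_S}$ numerically, and feed the resulting nefness of $\mathcal{T}_S$ and $K_S$ into Theorem~\ref{campana}.
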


The case where Kodaira dimension $\kappa(X)\geqslant 1$ is very difficult in general.
But the case where $dimX=2$, we can obtain the following result. 

\begin{proposition}
Let S be a smooth projective minimal complex surface with the Kodaira dimension $\kappa(S)\geqslant1$. Then the normalized tautological divisor of tangent bundle 
$\Lambda_{\mathcal{T}_S}$ is not nef.
\end{proposition}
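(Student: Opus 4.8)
The plan is to suppose that $\Lambda_{\mathcal{T}_S}$ is nef and to reach a contradiction, splitting the argument according to $\kappa(S)\in\{1,2\}$. By Theorem \ref{nakayama}, nefness of $\Lambda_{\mathcal{T}_S}$ forces $\mathcal{T}_S$ to be $\mu$-semistable and makes the Bogomolov discriminant $\Delta(\mathcal{T}_S)=4c_2(\mathcal{T}_S)-c_1(\mathcal{T}_S)^2$ vanish. Substituting $c_1(\mathcal{T}_S)=-K_S$ and $c_2(\mathcal{T}_S)=e(S)$, where $e(S)$ denotes the topological Euler number, the second condition becomes the numerical identity
\[K_S^2=4\,e(S).\]
Since $S$ is minimal with $\kappa(S)\geq 1$, the canonical class $K_S$ is nef and $K_S^2\geq 0$, with $K_S^2>0$ exactly when $\kappa(S)=2$ and $K_S^2=0$ when $\kappa(S)=1$. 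I will use both halves of the conclusion of Theorem \ref{nakayama}: the discriminant identity rules out the general type case, while $\mu$-semistability rules out the properly elliptic case.

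First I would dispose of the case $\kappa(S)=2$, where $S$ is minimal of general type and $K_S^2>0$. The Miyaoka--Yau inequality gives $K_S^2\leq 3\,e(S)$, so in particular $e(S)\geq K_S^2/3>0$. Combining this with the identity $K_S^2=4\,e(S)$ produced above yields $4\,e(S)\leq 3\,e(S)$, i.e. $e(S)\leq 0$, a contradiction. Hence $\Lambda_{\mathcal{T}_S}$ is not nef when $\kappa(S)=2$.

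It remains to treat $\kappa(S)=1$, which I expect to be the main obstacle. Here $K_S^2=0$, so the identity $K_S^2=4\,e(S)$ forces $e(S)=0$. Writing $f:S\to B$ for the elliptic fibration associated to $S$, every fibre of $f$ must then have support a smooth elliptic curve, since among Kodaira's fibre types only the multiple fibres of type ${}_mI_0$ contribute $0$ to $e(S)$ whereas all genuinely singular fibres contribute a positive Euler number. Such an $f$ is isotrivial, and by the classification of elliptic surfaces with vanishing Euler number there is a finite \'etale cover $\pi:E\times B'\to S$ with $E$ an elliptic curve and $B'$ a smooth projective curve. Because the Kodaira dimension is invariant under \'etale covers, $\kappa(E\times B')=\kappa(S)=1$, and since $\kappa(E\times B')=\kappa(B')$ this forces $g(B')\geq 2$, hence $\deg\mathcal{T}_{B'}=2-2g(B')<0$.

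Finally I would pull the semistability condition up to the product and contradict it. We have $\pi^*\mathcal{T}_S=\mathcal{T}_{E\times B'}=\mathcal{O}_{E\times B'}\oplus\mathrm{pr}_{B'}^*\mathcal{T}_{B'}$, and the first Chern class of $\mathrm{pr}_{B'}^*\mathcal{T}_{B'}$ is a negative multiple of the class of the fibre $E\times\{\mathrm{pt}\}$. Thus, with respect to the polarization $\pi^*A$, the trivial subbundle $\mathcal{O}_{E\times B'}$ has slope $0$, strictly larger than $\mu(\pi^*\mathcal{T}_S)<0$, so $\pi^*\mathcal{T}_S$ is not $\pi^*A$-semistable. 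By the lemma recalled above on the behaviour of $\mu$-semistability under finite separable morphisms, $\mathcal{T}_S$ is then not $A$-semistable, contradicting Theorem \ref{nakayama}, and the case $\kappa(S)=1$ is finished. The only genuinely delicate input is this structural statement: extracting, from the single numerical equality $e(S)=0$, the fact that $S$ becomes a product $E\times B'$ after a finite \'etale cover. That is precisely where Kodaira's analysis of elliptic surfaces with trivial Euler number is needed; the remaining steps are the discriminant computation, the Miyaoka--Yau inequality, and a one-line slope estimate.
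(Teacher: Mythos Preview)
Your proof is correct. The $\kappa(S)=2$ case matches the paper exactly (Miyaoka--Yau against the discriminant identity $c_1^2=4c_2$). For $\kappa(S)=1$ you take a genuinely different route. You invoke the structure theorem for projective elliptic surfaces with $e(S)=0$---that such a surface admits a finite \'etale cover by a product $E\times B'$---and then destabilise $\pi^*\mathcal{T}_S$ on the product via the trivial summand $\mathcal{O}_{E\times B'}$. The paper instead stays on $S$: from $\chi(\mathcal{O}_S)=0$ it deduces that every singular fibre is a multiple fibre $m_iF_i$, then notes that the saturation of $\pi^*\Omega_C$ inside $\Omega_S$ is the line bundle $\pi^*\Omega_C\otimes\mathcal{O}_S\bigl(\sum_i(m_i-1)F_i\bigr)$, which by the canonical bundle formula (with $\deg\mathcal{L}=\chi(\mathcal{O}_S)=0$) is numerically equivalent to $K_S$; since $K_S\cdot A>0$ for $\kappa=1$, this subsheaf has slope $K_S\cdot A>\tfrac{1}{2}K_S\cdot A=\mu_A(\Omega_S)$, contradicting semistability directly. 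The paper's argument is thus more self-contained---it needs only the canonical bundle formula and an obvious rank-one subsheaf---whereas yours relies on a nontrivial classification input (existence of an orbifold cover of the base killing the multiple fibres, together with the fact that a \emph{projective} principal elliptic bundle over a curve has torsion torsor class, so that a further \'etale cover trivialises it). Your approach has the merit of making the destabilising direction geometrically transparent (it is literally the elliptic factor), but the paper's route is shorter and avoids the structural black box you yourself flag as ``delicate.''
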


\begin{proof}
If the Kodaira dimension $\kappa(S)=1$, then S is a relatively minimal elliptic surface $\pi : S \rightarrow C$. By virtue of Theorem \ref{nakayama} if $\Lambda_{\mathcal{T}_S}$ is nef we have $c_1(S)^2=c_2(S)=0$.
Therefore we have the Euler character $\chi(\mathcal{O}_S)=0$ and a singular fiber of S is multiple fiber from corollary 16 and 17 in \cite{friedman}.
By canonical bundle formula for elliptic surfaces $($c.f Theorem 15 in \cite{friedman}$)$ we obtain
\[K_S=\pi^*(K_C\otimes \mathcal{L})\otimes \mathcal{O}_S(\displaystyle \sum_i(m_i-1)F_i)\]
where $\mathcal{L}$ is a line bundle on C of $deg(\mathcal{L})=0$ and $F_i$ is the multiple fiber of $\pi$ with the multiplicity $m_i$.
We consider the exact sequence of sheaves $0 \rightarrow \pi^*(\Omega_C) \rightarrow \Omega_S$.
This sheaf morphism drops rank on multible fibers. Therefore we can get the torsion-free subsheaf  $0 \rightarrow \pi^*(\Omega_C)\otimes \mathcal{O}_S(\displaystyle \sum_i(m_i-1)F_i) \rightarrow \Omega_S$.
In this case we have $\pi^*(\Omega_C)\otimes \mathcal{O}_S(\displaystyle \sum_i(m_i-1)F_i)\equiv_{num} K_S$.
Since $K_S$ is not numerical trivial, for an ample divisor A we have $\mu_A(\pi^*(\Omega_C)\otimes \mathcal{O}_S(\displaystyle \sum_i(m_i-1)F_i))=A.K_S>A.K_S/2=\mu_A(\Omega_S)$.
This is a contradiction to the semistability of $\Omega_S$.

If the Kodaira dimension $\kappa(S)=2$ i.e. S is of general type, we have $0<c_1^2(S)\leqslant 3c_2(S)<4c_2(S)=c_1^2(S)$ by Theorem \ref{nakayama} and Miyaoka-Yau inequality. It is a contradiction.
\end{proof}

Combining with these results we have a characterization of abelian surface up to finite \'etale covering. 

\begin{theorem}
Let S be a smooth projective complex surface. 
If the normalized tautological line bundle of tangent bundle is nef, then there is an $\acute{e}tale$ covering $A \rightarrow S$ from abelian surface A.
\end{theorem}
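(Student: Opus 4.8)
The plan is to prove the final theorem by reducing it to the cases already settled in this section according to the Kodaira dimension $\kappa(S)$ of the surface $S$, using the classification of surfaces. The hypothesis is that $\Lambda_{\mathcal{T}_S}$ is nef, and we want to produce an \'etale cover $A \to S$ from an abelian surface $A$. The key observation is that nefness of $\Lambda_{\mathcal{T}_S}$ places strong restrictions on $S$, and the earlier results in this section handle each value of $\kappa(S)$ separately. So the whole argument is an exhaustion over $\kappa(S) \in \{-\infty, 0, 1, 2\}$.

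**Main steps.**
First I would rule out $\kappa(S) = -\infty$. By the classification of surfaces, a surface of negative Kodaira dimension is either rational or birationally ruled, hence uniruled, and in particular contains a rational curve. Then Proposition~\ref{nonnef} (nefness of $\Lambda_{\mathcal{T}_X}$ fails whenever $X$ contains a rational curve) shows $\Lambda_{\mathcal{T}_S}$ cannot be nef, a contradiction; this is exactly the content of the Corollary stating that a surface of negative Kodaira dimension has non-nef $\Lambda_{\mathcal{T}_S}$. Next, for $\kappa(S) \geqslant 1$ I would invoke the preceding Proposition. That Proposition is stated for \emph{minimal} surfaces, so I must first pass to a minimal model $S_{\min}$; but if $S$ were not already minimal it would contain a $(-1)$-curve, which is a rational curve, again contradicting nefness of $\Lambda_{\mathcal{T}_S}$ via Proposition~\ref{nonnef}. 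Hence $S$ is automatically minimal, and the Proposition forces $\kappa(S) \leqslant 0$, eliminating $\kappa(S) = 1$ and $\kappa(S) = 2$.

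**The remaining case and its resolution.**
The only surviving possibility is $\kappa(S) = 0$. Here I would apply the Corollary that immediately precedes this theorem: it asserts precisely that a smooth projective complex surface of Kodaira dimension zero with $\Lambda_{\mathcal{T}_S}$ nef admits an \'etale cover $A \to S$ from an abelian surface $A$. That Corollary in turn rests on Theorem~\ref{campana} of Campana--Peternell, via the observation (recorded just before Theorem~\ref{campana}) that when $K_S \equiv 0$, nefness of $\Lambda_{\mathcal{T}_S}$ is equivalent to nefness of $\mathcal{T}_S$ itself. Thus in the $\kappa(S)=0$ case we get $\mathcal{T}_S$ nef with $K_S$ nef, and Campana--Peternell delivers the abelian cover. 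Assembling the four cases completes the proof.

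**Main obstacle.**
Since every case has already been prepared by the results immediately above, the proof itself is a short case analysis; the only point requiring genuine care is the reduction to minimal models. Concretely, I must justify that nefness of $\Lambda_{\mathcal{T}_S}$ is incompatible with the presence of any rational curve --- including the $(-1)$-curves that obstruct minimality and the rational curves forcing $\kappa = -\infty$ --- and this is supplied cleanly by Proposition~\ref{nonnef}, so that one never actually needs to transport the nefness hypothesis across a blow-down. I expect the subtlety to lie entirely in making sure the hypotheses of the two quoted Propositions (minimality, complex ground field, the exact form of the Kodaira-dimension assumption) are all met before invoking them.
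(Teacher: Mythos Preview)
Your proposal is correct and follows exactly the approach the paper intends: the paper gives no explicit proof of this theorem, simply prefacing it with ``Combining with these results we have\ldots'', and your case analysis over $\kappa(S)\in\{-\infty,0,1,2\}$ using Proposition~\ref{nonnef}, the two Corollaries, and the Proposition for $\kappa(S)\geqslant 1$ is precisely that combination. Your extra care in noting that nefness of $\Lambda_{\mathcal{T}_S}$ forces $S$ to be minimal (via the absence of $(-1)$-curves) is a point the paper leaves implicit but which is indeed needed to invoke the $\kappa(S)\geqslant 1$ Proposition.
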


In higher dimensional case we have the following characterization of abelian varieties up to finite \'etale covering.
The following result is due to Eiichi Sato.

\begin{theorem}$($E. Sato$)$
Let X be a smooth projective variety of n dimensional. 
If the normalized tautological divisor of tangent bundle is semiample, then there is an $\acute{e}tale$ covering $A \rightarrow X$ from abelian variety A.
\end{theorem}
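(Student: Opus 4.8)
The plan is to reduce the statement to the situation governed by the Campana--Peternell theorem (Theorem \ref{campana}). First I would apply Theorem \ref{semiample} to the tangent bundle $\mathcal{E}=\mathcal{T}_X$: since $\Lambda_{\mathcal{T}_X}$ is semiample, there is a finite \'etale morphism $f:X^{\prime}\to X$ such that $f^*\mathcal{T}_X$ is trivial up to twist by a line bundle, say $f^*\mathcal{T}_X\cong\mathcal{L}^{\oplus n}$ for some line bundle $\mathcal{L}$ on $X^{\prime}$. Because $f$ is \'etale we have $f^*\mathcal{T}_X\cong\mathcal{T}_{X^{\prime}}$, so in fact $\mathcal{T}_{X^{\prime}}\cong\mathcal{L}^{\oplus n}$ and $-K_{X^{\prime}}=n\,c_1(\mathcal{L})$. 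The goal then becomes to verify the two hypotheses of Theorem \ref{campana} for $X^{\prime}$, namely that $\mathcal{T}_{X^{\prime}}$ is nef and that $K_{X^{\prime}}$ is nef; once this is done, Theorem \ref{campana} yields an \'etale cover $A\to X^{\prime}$ by an abelian variety, and the composition $A\to X^{\prime}\to X$ (the second map being $f$) is the desired \'etale cover of $X$.

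The nefness of $K_{X^{\prime}}$ I would obtain from the absence of rational curves. A semiample divisor is nef, so $\Lambda_{\mathcal{T}_X}$ is nef; by Proposition \ref{nonnef} this forbids rational curves on $X$, and since $f$ is \'etale it forbids them on $X^{\prime}$ as well. A variety carrying no rational curve cannot have a non-nef canonical class, since by the Cone Theorem a $K_{X^{\prime}}$-negative part of the cone would be spanned by rational curves. Hence $K_{X^{\prime}}$ is nef, i.e.\ $X^{\prime}$ is a minimal, non-uniruled variety.

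The essential difficulty is the nefness of $\mathcal{T}_{X^{\prime}}$. As $\mathcal{T}_{X^{\prime}}\cong\mathcal{L}^{\oplus n}$, this is equivalent to the nefness of $\mathcal{L}$, and combined with $K_{X^{\prime}}$ nef it amounts to proving that $c_1(\mathcal{L})=-K_{X^{\prime}}/n$ is numerically trivial, i.e.\ $K_{X^{\prime}}\equiv 0$. One half is automatic: the nefness of $K_{X^{\prime}}$ gives $c_1(\mathcal{L})\cdot C\le 0$ for every curve $C$, and Miyaoka's generic semipositivity applied to the non-uniruled $X^{\prime}$, whose cotangent bundle $\Omega_{X^{\prime}}\cong(\mathcal{L}^{-1})^{\oplus n}$ is therefore generically nef, only reconfirms this same inequality on a general complete-intersection curve. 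The hard part is the reverse inequality, and this is where the splitting must be used geometrically: each summand $\mathcal{L}\hookrightarrow\mathcal{T}_{X^{\prime}}$ is a nonsingular rank-one foliation whose leaves, by non-uniruledness, contain no rational curve, and one has to show that this forces $c_1(\mathcal{L})$ to be numerically trivial (equivalently, that abundance for the nef $K_{X^{\prime}}$ forces $\kappa(X^{\prime})=0$ and $K_{X^{\prime}}\equiv 0$). This is precisely the step that, in dimension two, was carried out by hand through the earlier propositions excluding $\kappa(S)\ge 1$, and it is the main obstacle in arbitrary dimension.

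Granting $K_{X^{\prime}}\equiv 0$, the bundle $\mathcal{T}_{X^{\prime}}\cong\mathcal{L}^{\oplus n}$ is a direct sum of numerically trivial line bundles, hence numerically flat and in particular nef; this is consistent with the vanishing $\Delta(\mathcal{T}_{X^{\prime}})=0$ coming from Theorem \ref{nakayama}. Both hypotheses of Theorem \ref{campana} now hold for $X^{\prime}$, so there is an \'etale cover $A\to X^{\prime}$ from an abelian variety $A$, and composing with $f$ completes the argument.
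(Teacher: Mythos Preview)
Your outline coincides with the paper's: apply Theorem~\ref{semiample} to obtain an \'etale cover $f:X'\to X$ with $\mathcal{T}_{X'}\cong\mathcal{L}^{\oplus n}$, use Proposition~\ref{nonnef}/Corollary~\ref{minimal} to get $K_{X'}$ nef, and finish with Theorem~\ref{campana} once $K_{X'}\equiv 0$ is known. You have also correctly isolated the one genuine gap, namely excluding the possibility $\kappa(X)\ge 1$ (equivalently, forcing $c_1(\mathcal{L})\equiv 0$). The tools you list---Miyaoka's generic semipositivity, rank-one foliations, abundance---do not close this gap in arbitrary dimension, and you say so yourself; as it stands the proposal is therefore incomplete at exactly the step the paper regards as the heart of the argument.

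The ingredient you are missing is Druel's structure theorem for varieties with totally decomposed tangent bundle (\cite{druel}, Proposition~2). The paper argues as follows: if $\kappa(X)\ge 1$ then $K_{X'}$ is nef and not numerically trivial, so $K_{X'}\cdot A^{n-1}>0$ for an ample $A$, i.e.\ each summand $\mathcal{L}$ satisfies $\mathcal{L}\cdot A^{n-1}<0$. Druel's theorem then forces the universal cover of $X'$ to be the polydisc $D^n$, with the splitting of $\mathcal{T}_{X'}$ corresponding to the product decomposition; the paper observes that this is incompatible with all $n$ summands being \emph{the same} line bundle $\mathcal{L}$. Hence $\kappa(X)\le 0$, and together with $K_X$ nef and $\mathcal{T}_{X'}\cong\mathcal{L}^{\oplus n}$ one is in the situation of Theorem~\ref{campana}. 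So rather than invoking abundance or a direct foliation argument, the paper exploits the very special feature produced by Theorem~\ref{semiample}---that the summands of $\mathcal{T}_{X'}$ are not merely line bundles but are all isomorphic---to derive a contradiction from Druel's classification.
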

\begin{proof}
By virtue of Theorem \ref{nonnef} we may assume that X is minimal. 
From Theorem \ref{semiample} we have a finite \'etale covering $f: \tilde{X}\rightarrow X$ such that $\mathcal{T}_{\tilde{X}} \cong f^*\mathcal{T}_X\cong \mathcal{L}^{\oplus n}$ for some line bundle $\mathcal{L}$ on $\tilde{X}$.
If $\kappa(X)\geqslant1$ we have $K_{\tilde{X}}.A^{n-1}=-n\mathcal{L}.A^{n-1}>0$ for an ample divisor on $\tilde{X}$.
By Proposition 2 in \cite{druel} we know that the universal covering space of X is isomorphic to the direct product $D^n$ where D is an open disc $D=\{z\in\mathbb{C}| |z|<1\}$.
It contradicts to the decomposition into same line bundles $\mathcal{T}_{\tilde{X}}\cong \mathcal{L}^{\oplus n}$.
Therefore we may consider only the case where $\kappa(X)\leqslant0$.
In this case there is an $\acute{e}tale$ covering $T \rightarrow X$ from abelian variety T by Corollary \ref{minimal} and Theorem \ref{campana}.
\end{proof}

\end{document}